\documentclass[11pt,a4paper]{article}

\usepackage{amsmath,amssymb,amsthm,mathtools}
\usepackage{geometry}
\usepackage{enumitem}
\usepackage{microtype}
\usepackage{xcolor}
\usepackage{hyperref}

\hypersetup{
	colorlinks=true,
	linkcolor=blue!55!black,
	citecolor=blue!55!black,
	urlcolor=blue!55!black,
	pdftitle={Involution and Commutator Length in PU(n,1)},
	pdfauthor={Zhongqi Wang and Shihai Yang},
	pdfkeywords={Complex hyperbolic space, Involution length, Holomorphic involution},
}
\setlist{nosep,leftmargin=2em}
\allowdisplaybreaks[3]
\theoremstyle{plain}
\newtheorem{theorem}{Theorem}[section]
\newtheorem{proposition}[theorem]{Proposition}
\newtheorem{lemma}[theorem]{Lemma}
\newtheorem{corollary}[theorem]{Corollary}
\theoremstyle{definition}
\newtheorem{definition}[theorem]{Definition}
\theoremstyle{remark}

\newcommand{\C}{\mathbb C}
\newcommand{\R}{\mathbb R}
\newcommand{\T}{\mathbb T}
\newcommand{\CP}{\mathbb{CP}}
\newcommand{\HC}{\mathbf H_{\C}^{n}}
\newcommand{\Id}{\mathrm I}
\newcommand{\PU}{\mathrm{PU}}

\newcommand{\UU}{\mathrm U}

\newcommand{\spec}{\operatorname{spec}}
\newcommand{\tr}{\operatorname{tr}}
\newcommand{\im}{\operatorname{im}}
\newcommand{\kerx}{\operatorname{ker}}
\newcommand{\diag}{\operatorname{diag}}
\newcommand{\linv}{\ell_{\mathcal I}}
\newcommand{\rootlinv}{\operatorname{srl}_{\mathcal I}}

\newcommand{\ip}[2]{\left\langle #1,#2\right\rangle}
\newcommand{\orth}{\mathbin{\perp}}

\title{Involution and Commutator Length in $\PU(n,1)$}
\author{
	Zhongqi Wang \qquad Shihai Yang\thanks{Corresponding author.}\\[0.5em]
	\small School of Mathematics, Shanghai University of Finance and Economics\\
	\small 777 Guoding Road, Yangpu District, Shanghai 200433, P. R. China\\
	\small \texttt{wangzhongqi2021@126.com} \qquad
	\small \texttt{yang.shihai@mail.shufe.edu.cn}
}
\date{}

\begin{document}
	\maketitle
	
	\begin{abstract}
		We study decompositions of holomorphic isometries of complex hyperbolic
		space into holomorphic involutions. We prove that, for every $n\ge3$, the
		involution length of $\PU(n,1)$ is $4$. This improves the higher-dimensional
		upper bound $8$ of Paupert--Will, as well as the projective upper bound $5$
		implied by B\"unger's linear decomposition theorem, to the optimal value
		$4$. Combined with the two-dimensional result of Paupert--Will, this shows
		that the involution length of $\PU(n,1)$ is $4$ for every $n\ge2$.
		
		The core of the proof is a three-involution decomposition theorem for square
		roots: every $g\in\PU(n,1)$ has a square root of involution length at most
		$3$, and this uniform bound is optimal. The key construction is carried out
		first on a two- or three-dimensional indefinite block and then completed in
		higher dimensions by a necessary and sufficient spectral pairing criterion
		on the positive-definite orthogonal complement. The
		four-factor lower bound is provided by complex reflections in a point. As a
		consequence, every element of $\PU(n,1)$ is a single commutator, and the two
		elements in the commutator representation can be simultaneously reversed
		by the same nontrivial holomorphic involution. This extends the
		two-dimensional single-commutator result of Paupert--Will to every $n\ge2$
		and thereby verifies Djokovi\'{c}'s Conjecture~A for the family
		$\PU(n,1)$, $n\ge2$.
	\end{abstract}
	
	\medskip
	\noindent\textbf{Keywords.}
	Complex hyperbolic space, Involution length, Holomorphic involution.
	
	\medskip
	\noindent\textbf{2020 Mathematics Subject Classification.}
	51M10, 51F25, 20F12.
	
	\section{Introduction}
	
	Involutions are among the fundamental geometric transformations in the
	isometry groups of symmetric spaces. Every point of a Riemannian symmetric
	space carries a corresponding central involution; in the connected case,
	these involutions and their products are closely related to the identity
	component of the isometry group. This leads naturally to a uniform
	decomposition problem: given an isometry group, does there exist a constant,
	independent of the group element, such that every element is a product of at
	most that many involutions? The least such uniform bound is usually called
	the involution length of the group. For complex hyperbolic space, if all
	factors are additionally required to be holomorphic isometries, the problem
	is constrained simultaneously by Hermitian geometry and projective
	structure, and is closely connected with strong reversibility and commutator
	representations.
	
	Let $\HC$ denote complex hyperbolic $n$-space and let $\PU(n,1)$ be its group
	of holomorphic isometries. Throughout the paper, we consider only
	holomorphic involutions lying in $\PU(n,1)$, and we write
	\[
	\mathcal I
	=
	\{I\in\PU(n,1):I^2=1,\ I\ne1\}.
	\]
	For $g\in\PU(n,1)$, define
	\[
	\linv(g)
	=
	\min\{k\ge0:g=I_1\cdots I_k,\ I_j\in\mathcal I\},
	\]
	where the identity is assigned length $0$. The involution length of the group
	is then
	\[
	\linv(\PU(n,1))
	=
	\sup_{g\in\PU(n,1)}\linv(g).
	\]
	If antiholomorphic involutions are allowed, the nature of the problem is
	substantially different: holomorphic complex hyperbolic isometries then admit
	much shorter decompositions into antiholomorphic involutions. For related
	results, see Gongopadhyay--Thomas~\cite{GongopadhyayThomas2016}. In this
	paper, however, all factors are required to be holomorphic involutions in
	$\PU(n,1)$.
	
	This problem is directly related to strong reversibility. An element of a
	group is a product of at most two involutions if and only if it is strongly
	reversible, that is, if an involution conjugates it to its inverse. Thus, the
	two-involution decomposition problem is equivalent to the strong
	reversibility problem. On the other hand, if
	\[
	a^2=b^2=e^2=1,
	\]
	then
	\[
	(abe)^2=[ab,eb]_{\mathrm c},
	\qquad
	[x,y]_{\mathrm c}=xyx^{-1}y^{-1}.
	\]
	Hence the square of a product of three involutions naturally gives a
	commutator representation. This elementary identity links involution length,
	square-root decompositions, and commutator representations, and forms the
	basic starting point of our proof.
	
	Such questions in isometry groups have a long history. Basmajian--Maskit
	\cite{BasmajianMaskit2012} studied involution length and commutator
	representations in spherical, Euclidean, and real hyperbolic spaces.
	Decompositions into involutions and reflections in general linear groups and
	indefinite unitary groups have also been studied systematically
	\cite{Radjavi1969,GustafsonHalmosRadjavi1976,
		KnuppelNielsen1991,DjokovicMalzan1982}. These results provide important
	algebraic background, but they do not directly determine the holomorphic
	involution length of $\PU(n,1)$: all linear lifts must preserve a prescribed
	Hermitian form of signature $(n,1)$, and their projectivizations must still
	define nontrivial holomorphic involutions. For reversibility and strong
	reversibility of complex hyperbolic isometries, and their relation to
	two-involution decompositions, see
	Gongopadhyay--Parker~\cite{GongopadhyayParker2013}.
	
	The holomorphic involution-length problem in complex hyperbolic dimension
	two was completely solved by Paupert--Will~\cite{PaupertWill2017}. They
	proved that
	\[
	\linv(\PU(2,1))=4,
	\]
	and determined exactly which complex hyperbolic isometries in dimension two
	can be written as products of three holomorphic involutions. Their proof
	combines explicit constructions in complex hyperbolic geometry with the
	product-map method on the space of conjugacy classes; in particular, the
	regular elliptic case depends on a detailed analysis of the elliptic
	conjugacy-class space of $\PU(2,1)$ and its reducible walls. They also proved
	that every element of $\PU(2,1)$ is a commutator and, in
	\cite[Section~8.2, proof of Proposition~20]{PaupertWill2017}, that every
	element of $\PU(2,1)$ has a square root of involution length at most $3$.
	
	The higher-dimensional case is more complicated. Paupert--Will observed
	that the refined method used to raise the lower bound to $4$ in dimension
	two relies on a detailed understanding of the chamber structure of the space
	of elliptic conjugacy classes, which becomes substantially more complicated
	in higher dimensions and is therefore difficult to generalize directly. For
	$n\ge3$, they obtained
	\[
	3\le\linv(\PU(n,1))\le8.
	\]
	On the other hand, B\"unger's decomposition result for unitary groups over
	the complex numbers \cite[Section~4, p.~98, case~\textup{(i)}]{Bunger1997}
	implies the projective upper bound $5$. Thus, before the present work, one had
	\begin{equation}\label{eq:previous-range}
		3\le\linv(\PU(n,1))\le5,
		\qquad n\ge3.
	\end{equation}
	Bhunia--Gongopadhyay~\cite{BhuniaGongopadhyay2020} also noted that the exact
	involution length of $\PU(n,1)$ remained open for $n\ge3$. Our first result
	resolves this problem.
	
	\begin{theorem}\label{thm:main}
		For every integer $n\ge3$,
		\[
		\linv(\PU(n,1))=4.
		\]
		Equivalently, every $g\in\PU(n,1)$ is a product of at most four
		holomorphic involutions, and some elements cannot be written as products
		of at most three holomorphic involutions.
	\end{theorem}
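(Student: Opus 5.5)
The proof has two halves: an upper bound via a square-root decomposition, and a lower bound via complex reflections in a point.

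\textbf{Upper bound.} The key identity from the introduction is this: if $a,b,e\in\mathcal I\cup\{1\}$ and $h=abe$, then
\[
h^2=abeabe=(ab)(eb)(ab)^{-1}(eb)^{-1}\cdot(\text{correction}).
\]
I would rather exploit the more direct observation $h^2=(abe)(abe)$. Writing $h=abe$, we have $h^{-1}=eba$. So if $g=h^2$, then
\[
g=abe\cdot abe=a\,(be\,a\,be),
\]
and the product of six involutions does not collapse by itself. I will therefore use $h^2=a\cdot(b e a e b)\cdot b e\ldots$ only as motivation. The actual plan is to use the square-root theorem announced in the abstract: every $g$ has a square root $h$ with $\linv(h)\le 3$.

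From it, I would derive length $4$ as follows. Write $h=abe$ and $g=h^2=abeabe$. Conjugating by $a$, we get $a g a=beab\,e a$. The useful form is
\[
g=(abe)(abe)=a\,(b\,e\,a\,e\,b)\,(b\,e).
\]
Here $e a e$ is an involution, and so $c:=b(eae)b$ is an involution. Hence $g=a\cdot c\cdot b\cdot e$, which is four involutions. This step is purely algebraic, with the usual care when some factor is trivial or when $c=1$, which is impossible since $c$ is conjugate to $a$.

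So the substance is the square-root theorem, which I would prove in these steps.
\begin{enumerate}
\item Lift $g$ to $\UU(n,1)$ and use the standard classification: elliptic, parabolic, or loxodromic. Then split $\C^{n,1}$ orthogonally into a small indefinite block $W$ of dimension $2$ or $3$, carrying the non-elliptic part or one negative eigenvector together with a chosen positive eigenvector, and a positive-definite complement $W^\perp$ on which $g$ is a diagonal unitary.
\item On $W$, choose an explicit square root $h_W$ and write it as a product of three involutions of $\mathrm U(W)$. These are explicit $2\times2$ and $3\times3$ computations, mimicking Paupert--Will in $\PU(2,1)$.
\item On $W^\perp$, a unitary $u$ is a product of two involutions (up to scalar) exactly when its spectrum, modulo a scalar, is closed under $\lambda\mapsto\mu\bar\lambda$. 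Choose the square root on $W^\perp$ and distribute its eigenvalues between the first two factors so that this pairing criterion holds. Use the free third factor and the projective scalar to absorb unpaired eigenvalues.
\end{enumerate}

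\textbf{Main obstacle.} Step 3 is the hard part: matching the projective scalars across the blocks. All three factors must be involutions on the whole space with one common scalar per factor, so the eigenvalues on $W^\perp$ must be paired compatibly with the eigenvalues already forced on $W$. I would first try to choose the square roots of the eigenvalues on $W^\perp$ with sign freedom, $\pm\sqrt\lambda$, so that they form pairs $\{\sqrt\lambda,\sqrt\lambda^{-1}\}$ after scaling. Any leftover single eigenvalue would be moved into $W$, enlarging it from dimension $2$ to $3$. This is exactly why a three-dimensional block is needed.

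\textbf{Lower bound.} Take $g$ a complex reflection in a point with a generic rotation angle, i.e.\ a lift $\diag(1,\dots,1,\zeta)$ with the negative vector last. Suppose $g=I_1I_2I_3$. Each holomorphic involution lifts to a matrix with eigenvalues $\pm1$ whose eigenspaces have complementary dimensions. Then $gI_3=I_1I_2$ must be strongly reversible, which forces its eigenvalues, up to scalar, to be closed under inversion. I would take determinants and traces, and apply an eigenvalue-counting argument comparing $\mathrm{rank}(gI_3-\lambda)$ with the $\pm1$-eigenspaces, along the lines of Paupert--Will's point-reflection argument. The conclusion to reach is that for $\zeta$ outside a finite or measure-zero set no choice of $I_3$ works. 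Since $n\ge3$ gives large eigenspaces of $g$ with eigenvalue $1$, the dimension count should force $\zeta=\pm1$ or a root of unity of small order. This yields a contradiction and hence $\linv(g)=4$.
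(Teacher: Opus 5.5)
\textbf{Upper bound.} Your outline (find a square root of involution length at most $3$, then square it) is the paper's, but two things go wrong.

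First, the algebra is incorrect. We have $a\,(beaeb)\,(be)=abeae\,(bb)\,e=abea$, and this equals $(abe)^2=abeabe$ only if $be=1$. The correct rewriting is $(abe)^2=(aba)(aea)\,b\,e$.

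Second, and this is the real gap, the square-root theorem is not proved. You correctly name the obstacle: there must be one common scalar on the indefinite block and on $W^\perp$. Your remedy for it does not work. Sign choices $\pm\sqrt{\lambda_j}$ cannot make a generic spectrum on $W^\perp$ invariant under $\zeta\mapsto c^2/\zeta$, because each pair is an independent condition and you have only one phase $c$. So there is in general far more than ``a single leftover eigenvalue'' to push into $W$. Likewise, ``the third factor absorbs unpaired eigenvalues'' is exactly the step that needs a mechanism.

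The missing idea is to fix a square root $R$ and find one involution $K$ with $RK$ strongly reversible.
\begin{itemize}
\item On a positive block $\diag(a^2,b^2)$ with root $F_0=\diag(a,\varepsilon b)$, choosing the sign $\varepsilon$ makes $\ip{v}{xF_0^{-1}v}$ real for some unit $v$. A reflection $K_0$ with $K_0v=xF_0^{-1}v$ then gives $F_0K_0$ the prescribed eigenvalue $x$, and the other eigenvalue is forced by the determinant. By induction, the only obstruction to making $\{p\}\sqcup\spec(FK_W)$ invariant under $\rho_c(\zeta)=c^2/\zeta$ is $p^2\det D=c^{2(m+1)}$.
\item On the indefinite plane, a reflection $K_H$ gives $BK_H$ a negative-type eigenvalue equal to any phase $c$ of $-\ip{v}{Bv}$ with $\ip{v}{v}=-1$. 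These phases fill an arc of length at least $\pi/2$ (in the elliptic case, using both square roots).
\item The determinant condition becomes $c^{2d}=\det G$. Its solutions are spaced $\pi/d<\pi/2$ apart, so a compatible $c$ exists.
\end{itemize}
Finally, the three-dimensional block is forced by three-step unipotent parabolics (a size-$3$ Jordan block), not by leftover eigenvalues. It needs its own construction: a reflection $J_H$ such that $NJ_H$ has a positive eigenline with eigenvalue $p=-c^{-2}$ and a loxodromic complement with eigenvalues $cr^{\pm1}$. The bound $|\arg c|\le\pi/6$ is what guarantees the complement is loxodromic.

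\textbf{Lower bound.} Your eigenspace count is the paper's argument for $n\ge4$. Write $I_1I_2I_3=\zeta A_\lambda$, $\beta=\zeta\lambda$ and $D=I_1I_2$. The rank-one structure gives $m_D(\beta)+m_D(-\beta)\ge d-2$, and strong reversibility repeats this at $\pm\beta^{-1}$. The determinant guarantees $\beta^4\ne1$ when $\lambda^4\ne1$, so the four eigenspaces are distinct and $d\ge 2(d-2)$.

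For $n=3$, i.e.\ $d=4$, this is an equality and gives no contradiction. So your claim that $n\ge3$ already supplies large enough eigenspaces fails exactly in the first case of the theorem. A separate argument that uses the Hermitian signature is required.

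The paper splits by the types of the standard lifts.
\begin{itemize}
\item If some factor is $I_E$ with $\dim E=2$, use $\tr D=\tr D^{-1}$ together with $\tr(P_pI_E)=1-2\ip{P_Ep}{P_Ep}/\ip{p}{p}\ge1$ (since $E>0$ and $p<0$). This forces $\zeta^2=-\lambda^{-1}$, whence $\det D=\zeta^4\lambda^3=\lambda$, contradicting $\det D=\pm1$.
\item Otherwise all factors are, up to sign, reflections $R_j=\Id-2P_{q_j}$ in nonisotropic lines. The hyperplanes $q_j^\perp\subset\C^4$ share a nonzero vector $v$, and the cases $v$ null, positive or negative each force $\lambda=\pm1$. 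In the negative case, $p\in\bigcap_j q_j^\perp$, so every $q_j$ is positive and lies in $p^\perp$. Then a common fixed vector of $R_1,R_2$ in $p^\perp$ contradicts $R_1R_2=\lambda R_3$ there, since $\spec(\lambda R_3|_{p^\perp})=\{-\lambda,\lambda,\lambda\}$.
\end{itemize}
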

	
	Combining this with the two-dimensional result of Paupert--Will gives
	\begin{equation}\label{eq:all-dimensions-intro}
		\linv(\PU(n,1))=4,
		\qquad n\ge2.
	\end{equation}
	
	The higher-dimensional lower bound cannot be deduced merely from the
	embedding $\PU(2,1)\subset\PU(n,1)$, since the additional positive-definite
	directions may shorten an involution decomposition that was minimal in lower
	dimension. We therefore do not attempt to extend the two-dimensional chamber
	analysis of conjugacy classes. Instead, we use a higher-dimensional argument
	based on rank-one perturbations, spectral pairing, and dimension counts for
	eigenspaces. More precisely, we consider a family of complex reflections in
	a point (point rotations) that exists in every dimension. At the linear level,
	these elements are rank-one perturbations of scalar operators. If such a point
	rotation were a product of three holomorphic involutions, then, after
	separating one involution, the product of the remaining two involutions would,
	on the one hand, have prescribed eigenspaces of large dimension because of the
	rank-one perturbation structure and, on the other hand, have eigenvalues paired
	under inversion because of strong reversibility. For $n\ge4$, these two
	properties produce a contradiction by a dimension count. The case $n=3$ is
	precisely the critical dimension in which this estimate ceases to give a
	contradiction; this case requires a separate four-dimensional Hermitian
	argument.
	
	The four-factor upper bound follows from the following more structural
	square-root theorem. Instead of constructing a four-involution decomposition
	of $g$ directly, we study the smallest involution length attainable among all
	square roots
	\[
	h^2=g
	\]
	and prove that this square-root optimization problem has the same optimal
	uniform value in every dimension $n\ge2$.
	
	\begin{theorem}\label{thm:sqrt-main}
		For every $n\ge2$ and every $g\in\PU(n,1)$, there exists
		$h\in\PU(n,1)$ such that
		\[
		h^2=g,\qquad \linv(h)\le3.
		\]
		Moreover, this uniform upper bound is optimal:
		\[
		\sup_{g\in\PU(n,1)}
		\min_{\substack{h\in\PU(n,1)\\h^2=g}}
		\linv(h)
		=
		3.
		\]
	\end{theorem}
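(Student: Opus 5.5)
The plan has two parts: an upper bound, proved by a case analysis on the type of $g$, and an optimality statement, deduced from Theorem~\ref{thm:main}.

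\textbf{Optimality.} This part will follow from the lower bound in Theorem~\ref{thm:main} and from Paupert--Will when $n=2$. Suppose every $g$ had a square root $h$ with $\linv(h)\le2$. Then $h$ would be strongly reversible, so $h=I_1I_2$ with $I_j$ involutions or trivial. Hence $g=h^2=I_1I_2I_1I_2=I_1\cdot(I_2I_1I_2)$ would be a product of two involutions. But there exist $g$ (for instance regular elliptic elements that are not strongly reversible, or the point rotations used for the lower bound) with $\linv(g)>2$. So the supremum is at least $3$, and the content of the theorem is the upper bound.

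\textbf{Upper bound.} I would lift $g$ to $\UU(n,1)$ and split according to its type: elliptic, parabolic or loxodromic.
\begin{enumerate}
\item \emph{Loxodromic or parabolic $g$.} There is a unique nondegenerate indefinite block $V_0$, of dimension $2$ or $3$, carrying the non-elliptic part of $g$. Its positive-definite orthogonal complement $V_0^\perp$ carries a unitary action. On $V_0$ I choose an explicit square root $h_0$ in $\UU(V_0)$; the hyperbolic-translation or unipotent part has a canonical square root. I then write $h_0$ as a product of three involutions, using the $\PU(1,1)$ and $\PU(2,1)$ pictures (Paupert--Will's construction).
\item \emph{Elliptic $g$.} Take $V_0$ to be the negative line plus one or two positive eigendirections, chosen suitably.
\item \emph{Extension to $V_0^\perp$.} Diagonalize $g$ on $V_0^\perp$ with eigenvalues $\mu_k$. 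I must choose square roots $\nu_k$, $\nu_k^2=\mu_k$ up to the common scalar, so that $h|_{V_0^\perp}$ is also a product of three involutions, compatibly with the three involutions on $V_0$. The key reduction: $h=I_1I_2I_3$ if and only if $I_3h=I_1I_2$ for some involution $I_3$, and $I_1I_2$ is strongly reversible. On the complement, $I_3$ acts as $\pm1$ on eigenspaces. So after multiplying $h$ by $I_3$ I need the spectrum of $I_3h$ on $V_0^\perp$ to be closed under $\lambda\mapsto\lambda^{-1}$ (after projective rescaling), with matching multiplicities. The freedom of sign in each $\nu_k$, together with the choice of $I_3=\pm1$ on each eigenline, should allow the eigenvalues to be paired as $\{\lambda,\lambda^{-1}\}$. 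Any leftover unpaired eigenvalues ($\pm1$ after rescaling) are absorbed into $V_0$.
\end{enumerate}
I would formulate this as a necessary and sufficient spectral pairing criterion, i.e.\ a lemma characterizing when a unitary operator on a positive-definite space becomes, after a choice of such signs, a product of two involutions. Then I would check that it can always be met: the scalar normalization of the lift provides one extra complex parameter, and a free sign per eigenvalue suffices.

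\textbf{Main obstacle.} The hard part is the gluing. The involutions must be nontrivial in $\PU(n,1)$ and must preserve the Hermitian form globally. The rescaling scalar for the projective pairing must be the same on $V_0$ and on $V_0^\perp$. So the block construction on $V_0$ must leave a free parameter (the determinant phase of $h_0$) that can be tuned to match the pairing on the complement. In the elliptic case, I expect to handle this by enlarging $V_0$ to dimension $3$ when an odd unpaired eigenvalue remains. I would treat this step first, by an explicit $2\times2$ and $3\times3$ computation in $\UU(1,1)$ and $\UU(2,1)$.
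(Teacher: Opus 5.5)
\textbf{Optimality.} Your argument is correct and is essentially the paper's. Squares of elements of $\mathcal I^{\le2}$ stay in $\mathcal I^{\le2}$ (Lemma~\ref{lem:two-square-closed}), so no $g\notin\mathcal I^{\le2}$ has a square root of length $\le2$. The paper takes $g$ to be a point rotation (Lemma~\ref{lem:point-not-two}), which avoids citing Theorem~\ref{thm:main} or Paupert--Will.

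\textbf{Upper bound: the gap.} Your architecture matches the paper's: find one involution $K$ with $[RK]$ strongly reversible (Lemma~\ref{lem:sqrt-three-reduction}), treat a $2$- or $3$-dimensional active block, and pair the spectrum on the positive-definite complement under $\zeta\mapsto c^2/\zeta$. The gap is in step~3. You let $I_3$ act as $\pm1$ on the eigenspaces of $g|_{V_0^\perp}$, so it commutes with $h$ there. The eigenvalues of $I_3h$ on $V_0^\perp$ are then just $\pm\nu_k$, and the sign of $\nu_k$ and the sign of $I_3$ on that line are the same freedom. Signs cannot create pairings: $(\sigma_j\nu_j)(\sigma_k\nu_k)=c^2$ forces $\mu_j\mu_k=c^4$, and a fixed point $\sigma_k\nu_k=\pm c$ forces $\mu_k=c^2$, whatever the signs. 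Take $g$ elliptic with generic eigenvalues and $m=\dim V_0^\perp\ge5$. At most two $\mu_k$ can be matched with positive-type eigenvalues on your $V_0$. The remaining ones need at least two such relations with the same $c$, hence a $c$-free relation such as $\mu_1\mu_2=\mu_3\mu_4$ or $\mu_1\mu_2=\mu_3^2$, which fails generically. So ``a free sign per eigenvalue suffices'' is false. Enlarging $V_0$ for parity does not help, because the obstruction is generic non-pairability, not parity. The phase is also not a free continuous parameter: $\rho_c$-invariance of the full spectrum of $RK$ gives $\det(RK)^2=c^{2d}$, while $\det(RK)^2=\det G$, so $c$ lies on a finite grid of $2d$ points.

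\textbf{The missing idea.} The involution on the positive-definite complement must \emph{mix} eigenspaces, which moves eigenvalues continuously. On the span of two eigenvectors, take $F_0=\diag(a,\varepsilon b)$ and $w=xF_0^{-1}v$ for a suitable unit vector $v$. The reflection in the positive line through $v-w$ (Lemma~\ref{lem:equal-norm-exchange}) gives $F_0K_0v=xv$ for any prescribed $x\in\T$, once $\varepsilon$ is chosen so that $x/a$ and $x/(\varepsilon b)$ lie in opposite closed half-planes. The other eigenvalue is then forced to be $-\varepsilon ab/x$. Taking $x=c^2/p$ and inducting two dimensions at a time shows that $p^2\det D=c^{2(m+1)}$ is the \emph{only} obstruction (Proposition~\ref{prop:sqrt-positive-completion}). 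Your gluing problem then becomes: realize on the active block a negative-type eigenvalue $c$ of $BK_H$ lying on the grid $c^{2d}=\det G$. The paper does this by computing the arc $\Phi(B)$ of attainable phases and applying Lemma~\ref{lem:phase-grid}. In the three-step unipotent case it instead produces a positive eigenline with eigenvalue $p=-c^{-2}$ and a loxodromic remainder. Without this eigenvalue-moving mechanism, step~3 as you formulate it cannot be completed.
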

	
	For $n=2$, this square-root property was already proved by Paupert--Will
	\cite[Section~8.2, proof of Proposition~20]{PaupertWill2017}; our construction
	applies uniformly to every $n\ge2$ and covers elliptic and loxodromic
	isometries as well as all types of parabolic isometries.
	
	The proof of Theorem~\ref{thm:sqrt-main} first reduces the indefinite part
	needed for the square-root construction to a low-dimensional invariant block
	and then completes the construction through a spectral pairing
	mechanism on the positive-definite orthogonal complement. Since the
	Hermitian form has signature $(n,1)$, one may choose a two-dimensional
	nondegenerate invariant subspace of signature $(1,1)$ for elliptic,
	loxodromic, and two-step unipotent isometries, whereas a three-dimensional
	nondegenerate invariant subspace of signature $(2,1)$ suffices for
	three-step unipotent isometries. We refer to these low-dimensional subspaces
	as \emph{active blocks}.
	
	In the case of a two-dimensional active block, we first construct a square
	root and multiply it by a suitable unitary involution so that the resulting
	unitary transformation has a negative-type eigenvalue with phase $c$. In the
	three-step unipotent case, we first construct a loxodromic two-dimensional
	block having the same phase inside the three-dimensional active block. The
	remaining spectral pairing problem is then reduced to the positive-definite
	orthogonal complement. We prove that, after allowing a choice of signs for
	the square roots of the eigenvalues and multiplication by a further unitary
	involution, the only obstruction to making the resulting spectrum invariant
	under
	\[
	\zeta\longmapsto \frac{c^2}{\zeta}
	\]
	is a determinant phase condition. Combining this criterion with an
	appropriate choice of phase completes the three-involution square-root
	construction for every isometry type.
	
	Combining the square-root theorem with the identity above immediately yields
	a single-commutator representation; moreover, the
	two elements in the resulting commutator can be simultaneously reversed by
	the same nontrivial holomorphic involution.
	
	To fix notation, for an arbitrary group $G$ define
	\[
	\operatorname{cl}_G(g)
	=
	\inf\left\{
	k\in\mathbb Z_{\ge0}:
	g=\prod_{j=1}^{k}[x_j,y_j]_{\mathrm c},
	\ x_j,y_j\in G
	\right\},
	\]
	and
	\[
	\operatorname{cl}(G)
	=
	\sup_{g\in G}\operatorname{cl}_G(g),
	\]
	where the empty product is $1$ and $\inf\varnothing=+\infty$ by convention.
	
	Djokovi\'{c}~\cite[Conjecture~A]{Djokovic1986} conjectured that every
	centerless simple real Lie group has property~(C), meaning that every group
	element is a commutator. We verify this conjecture for the entire family of
	complex hyperbolic isometry groups $\PU(n,1)$, $n\ge2$, and obtain the
	following stronger simultaneous-reversal property.
	
	\begin{corollary}\label{cor:commutator-main}
		For every $n\ge2$ and every $g\in\PU(n,1)$, there exist
		$x,y\in\PU(n,1)$ and a nontrivial holomorphic involution $I$ such that
		\[
		g=[x,y]_{\mathrm c},
		\qquad
		IxI=x^{-1},
		\qquad
		IyI=y^{-1}.
		\]
		In particular,
		\[
		\operatorname{cl}(\PU(n,1))=1.
		\]
	\end{corollary}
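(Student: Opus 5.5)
Given $g\in\PU(n,1)$, I will apply Theorem~\ref{thm:sqrt-main} to get $h\in\PU(n,1)$ with $h^2=g$ and $\linv(h)\le3$. First I will reduce to the case of exactly three factors. If $\linv(h)\le2$, I pad the product: with $I\in\mathcal I$ arbitrary, $h=hI\cdot I$ and $I$ is not the identity. The cleaner route is to handle small lengths directly. If $h=1$, then $g=1=[1,1]_{\mathrm c}$, and any nontrivial holomorphic involution reverses $x=y=1$. If $h=a\in\mathcal I$, then $g=1$ as well. If $h=ab$ with $a,b\in\mathcal I$, then $g=(ab)^2$. Setting $e=b$ gives $h=ab\cdot b\cdot b$, a product of three involutions, so this case falls under the general one. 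Hence I may write $h=abe$ with $a,b,e\in\mathcal I$.

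Next comes the identity from the introduction. I set $x=ab$ and $y=eb$. Then
\[
[x,y]_{\mathrm c}=ab\,eb\,(ab)^{-1}(eb)^{-1}=ab\,eb\,ba\,be=abe\,(bb)\,abe\cdot\ldots
\]
I will recompute this carefully: $(ab)^{-1}=ba$ and $(eb)^{-1}=be$, so
\[
[x,y]_{\mathrm c}=a\,b\,e\,b\,b\,a\,b\,e=a b e\, a b e=(abe)^2=h^2=g,
\]
using $b^2=1$. This is the verification to state.

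For the simultaneous reversal I take $I=b$, which is a nontrivial holomorphic involution. Then $bxb=b\,ab\,b=ba=x^{-1}$ and $byb=b\,eb\,b=be=y^{-1}$. Since $b\in\mathcal I$ is nontrivial, this gives the first claim. Consequently $\operatorname{cl}_{\PU(n,1)}(g)\le1$ for all $g$. A nontrivial element has commutator length exactly $1$, so $\operatorname{cl}(\PU(n,1))=1$; this is not $0$ because the group is nontrivial.

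All the real content lies in Theorem~\ref{thm:sqrt-main}, and the only care needed here is the degenerate cases. Even when $g=1$ a nontrivial involution $I$ must be exhibited. For $g=1$ I take $x=y=1$ and any $I\in\mathcal I$, for instance a central involution in a point of $\HC$, which exists.
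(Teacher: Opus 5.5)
The overall route is the paper's: take a square root $h=abe$ of involution length at most $3$, set $x=ab$, $y=eb$, $I=b$, and check $[x,y]_{\mathrm c}=(abe)^2$ and $bxb=x^{-1}$, $byb=y^{-1}$. Your handling of $h=1$ and of $\linv(h)=1$ (where $g=h^2=1$) is fine.

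The step that fails is the reduction when $\linv(h)=2$. With $h=ab$ and $e=b$ you get $abe=ab\,b=a$, not $h$. If you instead read $ab\cdot b\cdot b$ as the triple $(ab,b,b)$, the first factor $ab$ is not an involution unless $a$ and $b$ commute. Your first padding idea $h=hI\cdot I$ has the same defect: it only works when $h$ itself is an involution. That is exactly how the paper uses it, and only for $\linv(h)=1$. So your claim ``hence I may write $h=abe$ with $a,b,e\in\mathcal I$'' is not justified for length two.

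The fix is immediate and is what the paper does: drop the requirement that $a$ and $e$ be nontrivial. Your commutator computation and the reversal only use $a^2=b^2=e^2=1$ and $b\ne1$. So for $h=a_0b_0$ take $(a,b,e)=(a_0,b_0,1)$, that is, $x=a_0b_0$, $y=b_0$, $I=b_0$. Then $[x,y]_{\mathrm c}=a_0b_0\,b_0\,b_0a_0\,b_0=(a_0b_0)^2=g$, and $b_0$ reverses both $x$ and $y$. With that change the argument is complete.
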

	
	Thus, the three conclusions of this paper are not independent decomposition
	results, but are linked by a single structure. The rank-one perturbation and
	spectral pairing associated with point rotations give the exact four-factor
	lower bound; low-dimensional indefinite blocks and the necessary and
	sufficient spectral pairing criterion give optimal three-involution square
	roots; and the identity for the square of a product of three involutions turns
	these square roots into four-involution decompositions and
	single-commutator representations with simultaneous reversal. The
	optimality of the square-root constant $3$ follows from the closure of
	products of two involutions under squaring and the two-factor obstruction for
	point rotations.
	
	The paper is organized as follows. Section~\ref{sec:prelim} fixes the
	Hermitian model of complex hyperbolic space and the basic notation, and
	treats projective lifts, normalization of involutions, the spectral criterion
	for strong reversibility, and the decomposition into low-dimensional active
	blocks in Hermitian signature $(n,1)$. Section~\ref{sec:lower} proves the
	four-factor lower bound using point rotations. Section~\ref{sec:sqrt-tools}
	develops the algebraic reductions and phase-selection tools needed for the
	square-root construction and establishes the necessary and sufficient
	spectral pairing criterion on the positive-definite orthogonal complement.
	Section~\ref{sec:sqrt-types} treats the two-dimensional active blocks and the
	three-dimensional active block of a three-step unipotent isometry. Finally,
	Section~\ref{sec:sqrt-conclusion} proves the two main theorems and
	Corollary~\ref{cor:commutator-main}.
	
	\section{Preliminaries}\label{sec:prelim}
	
	We fix the Hermitian linear-algebra notation used throughout the paper. We
	then describe the standard form of holomorphic involutions, orthogonal
	gluing, and the relation between two-involution decompositions and strong
	reversibility, before giving a decomposition of isometries into
	low-dimensional active blocks in Hermitian signature $(n,1)$. Let
	$V=\C^{n,1}=\C^{n+1}$ be endowed with a nondegenerate Hermitian form
	$\ip{\cdot}{\cdot}$ of signature $(n,1)$, and write
	\[
	\begin{aligned}
		V^-&=\{Z\in V:\ip{Z}{Z}<0\},\\
		V^0&=\{Z\in V\setminus\{0\}:\ip{Z}{Z}=0\},\\
		V^+&=\{Z\in V:\ip{Z}{Z}>0\}.
	\end{aligned}
	\]
	If $\pi:V\setminus\{0\}\to\CP^n$ denotes projectivization, then
	$\HC=\pi(V^-)$. Let $\UU(n,1)$ be the unitary group preserving this
	Hermitian form, and let
	\[
	\PU(n,1)
	=
	\UU(n,1)\big/\{e^{i\theta}\Id:\theta\in\R\}
	\]
	be the corresponding projective unitary group. It acts naturally on $\HC$
	and is the group of holomorphic isometries of $\HC$. For $A\in\UU(n,1)$,
	write $[A]$ for its projective class in $\PU(n,1)$. The notation $\spec(A)$
	always denotes the spectral multiset counted with algebraic multiplicity, and
	$\sqcup$ denotes multiset union with multiplicities retained.
	
	We take the Hermitian form $\ip{\cdot}{\cdot}$ to be conjugate-linear in the
	first variable and linear in the second, and write
	\[
	\T=\{z\in\C:|z|=1\}.
	\]
	
	For a nondegenerate Hermitian subspace $E\le V$, the notation $E>0$
	(respectively, $E<0$) means that the Hermitian form is positive definite
	(respectively, negative definite) on $E$. Since
	\[
	\operatorname{sign}(V)=(n,1),
	\]
	the Witt index of $V$ is $1$; equivalently, every totally isotropic subspace
	has complex dimension at most $1$. See Goldman~\cite{Goldman1999} for the
	basic background.
	
	We next introduce notation for involution length that will be used below.
	For fixed $n$, retain the notation $\mathcal I$ and $\linv$ from the
	introduction. For $k\ge0$, set
	\[
	\mathcal I^{\le k}
	=
	\left\{
	I_1\cdots I_j:
	j\in\{0,\ldots,k\},\
	I_1,\ldots,I_j\in\mathcal I
	\right\},
	\]
	where the product for $j=0$ is understood to be the identity. Thus,
	$g\in\mathcal I^{\le k}$ if and only if $\linv(g)\le k$.
	
	We begin with a projective lifting fact that will be used repeatedly.
	
	\begin{lemma}\label{lem:projective-lifting}
		Let $A\in\UU(n,1)$.
		\begin{enumerate}[label=\textup{(\roman*)}]
			\item If $[A]^2=1$, then there exists $\tau\in\T$ such that
			$J=\tau A$ satisfies $J^2=\Id$.
			\item If $[A]\in\mathcal I^{\le2}$, then there exist a unitary
			involution $J\in\UU(n,1)$ and $z\in\T$ such that
			\begin{equation}\label{eq:projective-strong-lift}
				JAJ=zA^{-1}.
			\end{equation}
		\end{enumerate}
	\end{lemma}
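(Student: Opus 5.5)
For part (i), the plan is to use that projective triviality means scalar. If $[A]^2=1$, then $A^2=\lambda\Id$ for some $\lambda\in\C^\ast$. Since $A^2\in\UU(n,1)$ and a scalar matrix preserves a nondegenerate form only when the scalar has modulus one, we get $\lambda\in\T$. Choose $\tau\in\T$ with $\tau^2=\lambda^{-1}$, which is possible because every element of $\T$ has a square root in $\T$. Then $J=\tau A$ lies in $\UU(n,1)$, since it is a unimodular scalar multiple of a unitary map, and
\[
J^2=\tau^2A^2=\lambda^{-1}\lambda\Id=\Id.
\]

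For part (ii), the plan is to split according to the length, which is $0$, $1$ or $2$.

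If $\linv([A])=0$, then $A=\mu\Id$ with $\mu\in\T$. Take $J=\Id$, which is a unitary involution in the sense $J^2=\Id$. Then
\[
JAJ=\mu\Id=\mu^2(\mu\Id)^{-1},
\]
so $z=\mu^2$ works.

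If $[A]$ is itself an involution, part (i) gives $\tau$ with $(\tau A)^2=\Id$, so $A^2=\tau^{-2}\Id$ and hence $A=\tau^{-2}A^{-1}$. Take $J=\Id$ again (or $J=\tau A$), and we get $JAJ=\tau^{-2}A^{-1}$.

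In the main case $[A]=[I_1][I_2]$ with $[I_j]\in\mathcal I$, apply part (i) to lift each factor to a genuine unitary involution $J_1,J_2\in\UU(n,1)$ with $J_j^2=\Id$. Then $A=\sigma J_1J_2$ for some $\sigma\in\T$. Set $J=J_1$ and compute
\[
JAJ=\sigma J_1J_1J_2J_1=\sigma J_2J_1=\sigma(J_1J_2)^{-1}=\sigma^2A^{-1},
\]
using $(J_1J_2)^{-1}=J_2J_1$ and $A^{-1}=\sigma^{-1}(J_1J_2)^{-1}$. So $z=\sigma^2$.

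The only delicate points are bookkeeping. The first is that scalars arising from projective equalities lie in $\T$; this holds because both sides are in $\UU(n,1)$, and in signature $(n,1)$ a scalar preserving the form has modulus one. The second is whether the statement intends $J$ to be nontrivial. If a nontrivial $J$ is required in the degenerate cases, one can instead take any reflection lift: for example $J=\diag(-1,1,\ldots,1)$ in an orthonormal basis, which commutes with a scalar $A$.

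I expect no real obstacle. The main step is simply the lifting in part (i), which then drives part (ii).
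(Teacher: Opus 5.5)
Your proof is correct and follows the paper's argument. Part (i) is identical. In part (ii) you lift both factors via (i), write $A=\sigma J_1J_2$, and conjugate by $J_1$ to get $z=\sigma^2$, which is exactly the paper's computation with $\alpha=\sigma^{-1}$. The only difference is that you treat lengths $0$ and $1$ as separate cases, whereas the paper pads with identity factors so that one computation covers them; the paper likewise allows $J=\pm\Id$ there.
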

	
	\begin{proof}
		If $[A]^2=1$, then $A^2=\xi\Id$. Since $A$ is unitary,
		$\xi\in\T$. Choose $\tau\in\T$ such that $\tau^2\xi=1$; this
		proves~\textup{(i)}.
		
		To prove~\textup{(ii)}, pad a projective decomposition of length less
		than two with the identity and use~\textup{(i)} to choose linear unitary
		involutions $J_1,J_2$ such that
		\[
		[J_1J_2]=[A].
		\]
		There is then an $\alpha\in\T$ such that $J_1J_2=\alpha A$. Since
		\[
		J_1(\alpha A)J_1
		=J_2J_1=(\alpha A)^{-1},
		\]
		we obtain
		\[
		J_1AJ_1=\alpha^{-2}A^{-1}.
		\]
		Taking $J=J_1$ and $z=\alpha^{-2}$ proves the assertion.
	\end{proof}
	
	We now put linear unitary involutions into a standard form that will be
	convenient below.
	
	\begin{lemma}\label{lem:normalise-involution}
		Let $J\in\UU(n,1)$ satisfy $J^2=\Id$. Then there exists
		$\varepsilon\in\{\pm1\}$ such that the $(-1)$-eigenspace $E$ of
		$\varepsilon J$ is positive definite and
		\[
		\varepsilon J=\Id-2P_E,
		\]
		where $P_E$ is the Hermitian orthogonal projection onto $E$.
	\end{lemma}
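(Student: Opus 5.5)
Since $J^2=\Id$, the operator $J$ is diagonalizable with eigenvalues in $\{\pm1\}$. The plan is to decompose $V=V_+\oplus V_-$ into the $(+1)$- and $(-1)$-eigenspaces of $J$ and then show this decomposition is orthogonal with respect to $\ip{\cdot}{\cdot}$. For $X\in V_+$ and $Y\in V_-$, unitarity gives
\[
\ip{X}{Y}=\ip{JX}{JY}=\ip{X}{-Y}=-\ip{X}{Y},
\]
so $\ip{X}{Y}=0$. Hence $V=V_+\oplus V_-$ is an orthogonal direct sum, and each summand is nondegenerate, because a vector in $V_\pm$ orthogonal to $V_\pm$ would be orthogonal to all of $V$. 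Consequently the signatures add: $\operatorname{sign}(V_+)+\operatorname{sign}(V_-)=(n,1)$.

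Since the total negative index is $1$, exactly one of $V_+$ and $V_-$ carries the negative direction, and the other is positive definite. Its dimension may be $0$; the zero subspace is considered positive definite vacuously. If $V_-$ is positive definite, take $\varepsilon=1$. Otherwise $V_+$ is positive definite, and we take $\varepsilon=-1$, since the $(-1)$-eigenspace of $-J$ is $V_+$. In either case let $E$ be the $(-1)$-eigenspace of $\varepsilon J$, which is positive definite.

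Because $E$ is nondegenerate, $V=E\oplus E^{\perp}$, and the Hermitian orthogonal projection $P_E$ is well defined. By the orthogonality shown above, $E^{\perp}$ is exactly the $(+1)$-eigenspace of $\varepsilon J$. The operator $\Id-2P_E$ acts as $-1$ on $E$ and as $+1$ on $E^{\perp}$, so it agrees with $\varepsilon J$ on both summands. This gives $\varepsilon J=\Id-2P_E$.

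The only step needing care is the signature argument. One should check that nondegeneracy of the eigenspaces, and hence additivity of inertia, follows from the orthogonal splitting, and should handle the degenerate case $J=\pm\Id$, where $E=0$.
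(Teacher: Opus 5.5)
Your proof is correct and follows the paper's argument essentially step for step: the eigenspace splitting, orthogonality of $V_+$ and $V_-$ from unitarity, nondegeneracy and additivity of the negative index, the choice of $\varepsilon$, and the identification of $\varepsilon J$ with $\Id-2P_E$ on $E\perp E^{\perp}$. Your explicit remarks on why $E^{\perp}$ equals the $(+1)$-eigenspace and on the trivial case $E=0$ are small clarifications that the paper leaves implicit.
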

	
	\begin{proof}
		Since $J^2=\Id$, the map $J$ is diagonalizable and its eigenvalues are
		$\pm1$. Hence
		\[
		V=V_+(J)\oplus V_-(J),
		\qquad
		V_\pm(J)=\ker(J\mp\Id).
		\]
		If $x\in V_+(J)$ and $y\in V_-(J)$, then
		\[
		\ip{x}{y}=\ip{Jx}{Jy}=-\ip{x}{y},
		\]
		so $V_+(J)\perp V_-(J)$. Since $V$ is nondegenerate, both terms in
		this orthogonal direct sum are nondegenerate. Their negative indices sum
		to $1$, and therefore one of them is positive definite. If
		$V_-(J)>0$, take $\varepsilon=1$. If $V_-(J)$ is not positive
		definite, then $V_+(J)>0$, and we take $\varepsilon=-1$. Thus
		\[
		E:=V_-(\varepsilon J)>0.
		\]
		With respect to the orthogonal decomposition $V=E^\perp\perp E$, the
		map $\varepsilon J$ is $\Id$ on $E^\perp$ and $-\Id$ on $E$.
		Consequently, if $Z=Z_{E^\perp}+Z_E$, then $P_EZ=Z_E$, and
		\[
		\varepsilon JZ
		=Z_{E^\perp}-Z_E
		=Z-2P_EZ.
		\]
		Thus $\varepsilon J=\Id-2P_E$.
	\end{proof}
	
	For every positive-definite subspace $E>0$, we henceforth write
	\[
	I_E:=\Id-2P_E.
	\]
	By Lemma~\ref{lem:projective-lifting}\textup{(i)} and
	Lemma~\ref{lem:normalise-involution}, every nontrivial projective involution
	can be represented as $[I_E]$. We call such a linear unitary involution
	$I_E$ a \emph{standard involution}.
	
	We next record the orthogonal gluing fact used repeatedly below.
	
	\begin{lemma}\label{lem:orthogonal-gluing}
		Suppose that
		\[
		V=V_1\orth\cdots\orth V_s,
		\qquad
		A=A_1\orth\cdots\orth A_s\in\UU(V),
		\]
		where each $V_\nu$ is nondegenerate. Suppose that, for some integer
		$k$, every $A_\nu$ has a decomposition
		\[
		A_\nu=J_{\nu,1}\cdots J_{\nu,k},
		\qquad J_{\nu,j}\in\UU(V_\nu),\quad J_{\nu,j}^2=\Id,
		\]
		where shorter decompositions may be padded with identity maps. Then
		\[
		A=\widehat J_1\cdots\widehat J_k,
		\qquad
		\widehat J_j=J_{1,j}\orth\cdots\orth J_{s,j},
		\]
		and every $\widehat J_j$ is a linear unitary involution. After
		projectivizing and deleting factors whose projective class is the
		identity,
		\[
		\linv([A])\le k.
		\]
		Moreover, if every $A_\nu$ is semisimple, then $A$ is semisimple.
	\end{lemma}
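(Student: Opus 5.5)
The plan is to check the assertions of Lemma~\ref{lem:orthogonal-gluing} one at a time, using only that the decomposition $V=V_1\orth\cdots\orth V_s$ is orthogonal and each summand is nondegenerate. First, since the $V_\nu$ are nondegenerate and mutually orthogonal, every $Z\in V$ decomposes uniquely as $Z=\sum_\nu Z_\nu$ with $Z_\nu\in V_\nu$. Moreover,
\[
\ip{Z}{W}=\sum_\nu\ip{Z_\nu}{W_\nu}.
\]
For maps $B_\nu\in\UU(V_\nu)$, the block map $B_1\orth\cdots\orth B_s$ is then well defined, and it preserves $\ip{\cdot}{\cdot}$ term by term, so it lies in $\UU(V)$. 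Block maps multiply blockwise:
\[
(B_1\orth\cdots\orth B_s)(C_1\orth\cdots\orth C_s)=B_1C_1\orth\cdots\orth B_sC_s.
\]
Since identity maps are involutions, padding shorter decompositions with $\Id_{V_\nu}$ lets us assume every $A_\nu$ has exactly $k$ factors.

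Next, each $\widehat J_j=J_{1,j}\orth\cdots\orth J_{s,j}$ is unitary by the remark above. By blockwise multiplication, $\widehat J_j^2=J_{1,j}^2\orth\cdots\orth J_{s,j}^2=\Id$. Applying blockwise multiplication again to the product, the $\nu$-th block of $\widehat J_1\cdots\widehat J_k$ is $J_{\nu,1}\cdots J_{\nu,k}=A_\nu$, so the product equals $A$.

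Projectivizing gives $[A]=[\widehat J_1]\cdots[\widehat J_k]$. Each $[\widehat J_j]$ squares to $1$, so it is either the identity or an element of $\mathcal I$. Deleting the identity factors leaves a product of at most $k$ elements of $\mathcal I$, hence $\linv([A])\le k$ (the empty product covering the case $[A]=1$).

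For semisimplicity, if each $A_\nu$ is diagonalizable on $V_\nu$, then taking the union of eigenbases over $\nu$ gives a basis of $V$, because $V$ is the direct sum of the $V_\nu$. This is an eigenbasis for $A$, so $A$ is diagonalizable.

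The only step that needs care is the first one: the existence of orthogonal block maps rests on nondegeneracy, which guarantees that the orthogonal sum is actually a direct sum of the whole space. Everything after that is routine bookkeeping.
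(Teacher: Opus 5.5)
Your proposal is correct and follows the paper's proof essentially step for step: blockwise unitarity and squaring, blockwise multiplication to recover $A$, deletion of the projectively trivial factors, and the union of eigenbases for semisimplicity. The only cosmetic difference is that the paper cites Lemma~\ref{lem:normalise-involution} to represent each nontrivial $[\widehat J_j]$ by a standard involution, whereas you note directly that a nontrivial projective class squaring to $1$ lies in $\mathcal I$ by definition, which is all that $\linv([A])\le k$ requires.
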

	
	\begin{proof}
		Orthogonal direct sums preserve unitarity, and
		\[
		\widehat J_j^2
		=J_{1,j}^2\orth\cdots\orth J_{s,j}^2
		=\Id.
		\]
		Thus every $\widehat J_j$ is a linear unitary involution, and
		\[
		\widehat J_1\cdots\widehat J_k
		=(J_{1,1}\cdots J_{1,k})\orth\cdots\orth
		(J_{s,1}\cdots J_{s,k})
		=A.
		\]
		If $[\widehat J_j]\ne1$, then Lemma~\ref{lem:normalise-involution}
		shows that its projective class can be represented by a standard
		involution. If $\widehat J_j=\pm\Id$, then its projective class is the
		identity and can be deleted. Hence $\linv([A])\le k$. Finally, a finite
		direct sum of semisimple operators remains diagonalizable, and is
		therefore semisimple.
	\end{proof}
	
	We now recall the equivalence between products of two involutions and strong
	reversibility, and give the strong-reversibility criterion needed below.
	
	\begin{definition}
		An operator $A$ is called \emph{strongly reversible} if an involution
		conjugates $A$ to $A^{-1}$, that is, if there exists $J$ with $J^2=\Id$
		such that $JAJ=A^{-1}$.
	\end{definition}
	
	\begin{lemma}\label{lem:two-strong}
		If $A=I_1I_2$, where $I_1^2=I_2^2=\Id$, then
		$I_1AI_1=A^{-1}$. Conversely, if $JAJ=A^{-1}$, then
		\[
		A=J(JA)
		\]
		is a product of two involutions.
	\end{lemma}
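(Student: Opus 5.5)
Both directions are direct verifications from the defining identities $I_1^2=I_2^2=\Id$ and $J^2=\Id$; no earlier result is needed beyond the group structure.

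For the first assertion, take $A=I_1I_2$ with $I_1^2=I_2^2=\Id$. Compute $I_1AI_1=I_1I_1I_2I_1=I_2I_1$. Since each $I_j$ is its own inverse, $A^{-1}=(I_1I_2)^{-1}=I_2^{-1}I_1^{-1}=I_2I_1$. Hence $I_1AI_1=A^{-1}$, so $I_1$ is an involution conjugating $A$ to its inverse.

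For the converse, suppose $J^2=\Id$ and $JAJ=A^{-1}$. Since $J^2=\Id$, we have $A=J(JA)$, and $J$ is an involution, so it remains to show $(JA)^2=\Id$. Write $(JA)^2=(JAJ)A=A^{-1}A=\Id$. Thus $A=J\cdot(JA)$ is a product of two involutions.

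The statement should be read as concerning operators with $I_j^2=\Id$, possibly equal to $\pm\Id$, so no nontriviality needs to be checked. When this is later used projectively, trivial factors are discarded exactly as in Lemma~\ref{lem:orthogonal-gluing}. The argument has no real obstacle.
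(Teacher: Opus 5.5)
Your proposal is correct and matches the paper's proof: the forward direction is the computation $I_1(I_1I_2)I_1=I_2I_1=A^{-1}$, and the converse is $(JA)^2=(JAJ)A=A^{-1}A=\Id$. Your closing remark that trivial factors such as $\pm\Id$ are simply discarded at the projective level is consistent with how the paper uses the lemma later.
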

	
	\begin{proof}
		The first identity follows from
		$I_1(I_1I_2)I_1=I_2I_1$. Conversely, it suffices to observe that
		$(JA)^2=JAJA=A^{-1}A=\Id$.
	\end{proof}
	
	\begin{lemma}\label{lem:spectral-strong}
		Let $A\in\UU(n,1)$ be semisimple and admit an $A$-invariant
		orthogonal decomposition
		\[
		V=L\perp P,
		\qquad \dim_{\C}L=1,\quad L<0,\quad P>0.
		\]
		Suppose that $A|_L=c\Id_L$, where $c\in\T$. If the eigenvalues of
		$A|_P$, counted with multiplicity, are invariant under
		\[
		\rho_c(\zeta)=\frac{c^2}{\zeta},
		\]
		then $[A]$ is a product of at most two nontrivial holomorphic
		involutions.
	\end{lemma}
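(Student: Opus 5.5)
Since the spectrum of $A|_P$ is invariant under $\rho_c$, I would first rescale $A$ by $\bar c$ to reduce to $c=1$. Put $B=\bar cA$. Then $[B]=[A]$, $B|_L=\Id_L$, and the eigenvalues of $B|_P$ are permuted by $\zeta\mapsto\zeta^{-1}$, since $\rho_c(c\mu)=c\mu^{-1}$. Because $A$ is semisimple and $P>0$, the restriction $B|_P$ is a unitary operator on a positive-definite space. It is therefore diagonalizable in an orthonormal basis, with eigenvalues in $\T$ and mutually orthogonal eigenspaces $P_\mu=\ker(B|_P-\mu\Id)$. The inversion symmetry of the multiset gives $\dim P_\mu=\dim P_{\bar\mu}$ for every $\mu\in\T$.

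Next I would build a linear involution $J$ with $JBJ=B^{-1}$ by orthogonal gluing. On $L$ take $J_L=\Id_L$; this works because $B|_L=\Id_L$. On $P$, group the eigenvalues into three kinds: the pairs $\{\mu,\bar\mu\}$ with $\mu\ne\pm1$, the eigenvalue $1$, and the eigenvalue $-1$. For a pair $\mu\ne\bar\mu$, choose an isometry $T:P_\mu\to P_{\bar\mu}$, which exists because the dimensions agree and both spaces are positive definite. Define $J$ on $P_\mu\perp P_{\bar\mu}$ to be $T$ on $P_\mu$ and $T^{-1}$ on $P_{\bar\mu}$. This $J$ is a unitary involution, and it satisfies $JBJ=\bar\mu\,\Id$ on $P_\mu$ and $\mu\,\Id$ on $P_{\bar\mu}$, which is exactly the action of $B^{-1}$ there. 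On $P_{1}$ and $P_{-1}$, $B$ acts as $\pm\Id$ and coincides with $B^{-1}$, so we may take $J=\Id$. Lemma~\ref{lem:orthogonal-gluing}, applied to the $J$-pieces, shows that $J=J_L\perp J_P$ is a unitary involution on $V$ with $JBJ=B^{-1}$.

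Finally, Lemma~\ref{lem:two-strong} gives $B=J\cdot(JB)$ with $(JB)^2=\Id$, so $B$ is a product of two linear unitary involutions. Passing to the projective classes, $[A]=[B]=[J][JB]$. A factor that equals $\pm\Id$ has trivial class and is deleted; each remaining factor is a nontrivial holomorphic involution, represented by a standard involution by Lemma~\ref{lem:normalise-involution}. This proves that $[A]$ is a product of at most two nontrivial holomorphic involutions.

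The only delicate step is the bookkeeping of the eigenspace pairing: one must check that the eigenspaces are mutually orthogonal and that $J$ is unitary. Both follow because $B|_P$ is unitary on a definite space. No negative-definite phenomena arise, since $L$ is handled by the scalar normalization.
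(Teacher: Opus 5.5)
Your proof is correct and follows the paper's argument essentially step for step: rescale to $c^{-1}A$ so the negative line is fixed, build a reversing involution by swapping the paired eigenspaces $\mu\leftrightarrow\bar\mu$ on $P$ (identity on $L$ and on the $\pm1$-eigenspaces), and factor as $J\cdot(JB)$. The only cosmetic difference is in the final normalization. The paper notes directly that both factors fix $L$, so their $(-1)$-eigenspaces already lie in $P$, whereas you invoke Lemma~\ref{lem:normalise-involution}.
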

	
	\begin{proof}
		Set $U=c^{-1}A$. Then $U|_L=\Id_L$. If $\zeta$ is an eigenvalue of
		$A|_P$, then $\zeta/c$ is an eigenvalue of $U|_P$, and
		\[
		\frac{\rho_c(\zeta)}{c}=\frac{c}{\zeta}
		=\left(\frac{\zeta}{c}\right)^{-1}.
		\]
		Thus, the spectrum of $U|_P$ occurs in pairs
		$w\leftrightarrow w^{-1}$.
		
		Since $U$ is semisimple, for each pair of distinct spectral values
		$\{w,w^{-1}\}$ choose a unitary isometry between the corresponding
		equidimensional positive-definite eigenspaces and interchange them. Take
		the identity on the eigenspaces for the eigenvalues $\pm1$ and on the
		negative line $L$. Taking the orthogonal direct sum of these maps, as in
		Lemma~\ref{lem:orthogonal-gluing}, gives a unitary involution $I_1$
		satisfying
		\[
		I_1UI_1=U^{-1}.
		\]
		Let $I_2=I_1U$. Then
		\[
		I_2^2=I_1UI_1U=\Id,
		\qquad U=I_1I_2.
		\]
		Both factors fix the negative line $L$, so their $(-1)$-eigenspaces
		are contained in the positive-definite subspace $P$. After projective
		normalization and deletion of any identity factors, this gives a
		decomposition of $[A]=[U]$ into at most two holomorphic involutions.
	\end{proof}
	
	Finally, we give the active-block classification used throughout the paper.
	The essential point is that Witt index $1$ forces every noncompact or
	nonsemisimple part to be concentrated in a nondegenerate invariant subspace
	of dimension at most three.
	
	\begin{lemma}\label{lem:active-blocks}
		Every nonidentity element $[T]\in\PU(n,1)$ belongs to one of the
		following cases:
		\begin{enumerate}[label=\textup{(\alph*)}]
			\item $T$ is elliptic: it is semisimple and admits an
			$\ip{\cdot}{\cdot}$-orthogonal eigenbasis;
			
			\item $T$ is loxodromic: there is an orthogonal decomposition
			\[
			V=H\orth W,
			\qquad \operatorname{sign}(H)=(1,1),\quad W>0,
			\]
			such that $T|_H$ is a semisimple loxodromic block and $T|_W$ is
			semisimple;
			
			\item $T$ is two-step unipotent: there is an orthogonal
			decomposition
			\[
			V=H\orth W,
			\qquad \operatorname{sign}(H)=(1,1),\quad W>0,
			\]
			and, in a suitable null basis,
			\[
			T|_H=\lambda
			\begin{pmatrix}
				1&it\\
				0&1
			\end{pmatrix},
			\qquad |\lambda|=1,\quad t\in\R\setminus\{0\},
			\]
			while $T|_W$ is semisimple;
			
			\item $T$ is three-step unipotent: after multiplication by a
			central scalar, one can choose the lift $T$ so that there is an
			orthogonal decomposition
			\[
			V=H_3\orth W,
			\qquad
			\operatorname{sign}(H_3)=(2,1),\quad W>0,
			\]
			with
			\[
			T=N\orth D,
			\qquad
			(N-\Id)^3=0\ne(N-\Id)^2,
			\]
			where $D\in\UU(W)$ is semisimple.
		\end{enumerate}
	\end{lemma}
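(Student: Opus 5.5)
The proof will be a case split based on the Jordan structure of a lift $T\in\UU(n,1)$, driven by Witt index $1$. Since $[T]\ne1$, any lift of $[T]$ will do (scalars can be adjusted). By Jordan decomposition $T=T_sT_u$ with $T_s$ semisimple and $T_u$ unipotent, and both are polynomials in $T$, hence unitary.

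\textbf{Semisimple case.} Decompose $V$ into generalized eigenspaces $V_\lambda$. For eigenvectors $x\in V_\lambda$, $y\in V_\mu$ we have $\ip{x}{y}=\bar\lambda\mu\ip{x}{y}$, so $V_\lambda\perp V_\mu$ unless $\bar\lambda\mu=1$, i.e.\ $\mu=1/\bar\lambda$.

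If every eigenvalue lies in $\T$, the eigenspaces are mutually orthogonal. Hence each is nondegenerate, and choosing orthogonal bases inside each gives an orthogonal eigenbasis; this is case (a).

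Otherwise some $|\lambda|\ne1$. Then $V_\lambda$ and $V_{1/\bar\lambda}$ are totally isotropic, so each is one-dimensional by Witt index $1$. They pair nondegenerately, so $H=V_\lambda\oplus V_{1/\bar\lambda}$ has signature $(1,1)$, and $W=H^\perp>0$ is invariant with all eigenvalues of $T|_W$ in $\T$. This is case (b).

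\textbf{Nonsemisimple case.} Here $N=T_u-\Id\ne0$ is nilpotent. Unitarity gives $\ip{T_ux}{T_uy}=\ip{x}{y}$. Write $\log T_u=:X$, a polynomial in $N$; it is skew-Hermitian with respect to the form and nilpotent. Then:
\begin{itemize}
\item $\operatorname{im}X^k\perp\ker X^k$, since $\ip{X^kx}{y}=\pm\ip{x}{X^ky}$.
\item $\operatorname{im}X^k\cap\ker X^k$ is therefore totally isotropic, hence of dimension at most $1$.
\end{itemize}

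If $X^2\ne0$, the element $X^2z$ must be isotropic; a Jordan chain argument then forces the nilpotency index to be exactly $3$ (index $\ge4$ would produce a two-dimensional isotropic subspace). Call this fact ($\ast$); it bounds the size of the nontrivial Jordan blocks.

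\textbf{Main obstacle.} The key step is producing the invariant nondegenerate block. The plan is to take a generator $z$ of the top Jordan chain, chosen (using the form) so that the chain is $\{z,Xz\}$ or $\{z,Xz,X^2z\}$, and to set $H=\operatorname{span}\{z,Xz,\dots\}$. We must check that $H$ is nondegenerate, which amounts to showing its Gram matrix is antidiagonal-nonzero; this follows from $\ip{X^jz}{X^kz}=\pm\ip{z}{X^{j+k}z}$ together with nondegeneracy of the pairing between $X^{m}z$ and $z$. Then:
\begin{itemize}
\item Since the whole unipotent part lives in $H$, $H$ is $T$-invariant, provided we pick $z$ within a generalized eigenspace $V_\lambda$ with $|\lambda|=1$.
\item The complement $W=H^\perp$ has $X|_W=0$: otherwise a second nilpotent chain would give a further isotropic vector orthogonal to the first, contradicting Witt index $1$.
\item $W$ is positive definite, since $H$ already absorbs the negative index (signature $(1,1)$ or $(2,1)$).
\end{itemize}

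\textbf{Normal forms.} In the two-step case, use the null basis $z$, $Xz$ normalized so that $\ip{z}{Xz}$ is fixed. The unipotent part is $\Id+X$ with $\ip{Xz}{z}$ purely imaginary (skew-Hermitian), which yields the form $\lambda\begin{pmatrix}1&it\\0&1\end{pmatrix}$ with $t\in\R\setminus\{0\}$; this is case (c). In the three-step case, rescale the lift by $\lambda^{-1}$ so that $T|_{H_3}=N$ is unipotent, giving case (d) with $D=T|_W$ semisimple.
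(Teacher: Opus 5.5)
Your outline follows the paper's proof: eigenvalue orthogonality in the semisimple case; then the Jordan--Chevalley decomposition, the skew-adjoint nilpotent $X=\log T_u$, Witt index $1$ controlling the Jordan structure, the span of one Jordan chain inside a single $T_s$-eigenspace as the active block, and rescaling in case (d). However, the step you single out as the main obstacle is asserted, not proved. You reduce nondegeneracy of $H=\operatorname{span}\{z,Xz,\dots,X^mz\}$ (with $X^mz\ne0=X^{m+1}z$) to $\ip{X^mz}{z}\ne0$. You then justify this by ``nondegeneracy of the pairing between $X^mz$ and $z$'', which only restates it. The claim is not automatic for a generator of a maximal chain. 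In signature $(2,2)$, two size-$2$ Jordan blocks whose chains pair crosswise give $\ip{z}{Xz}=0$, and the chain span is totally isotropic. What excludes this here is that $X$ has only one nontrivial Jordan block. Your only argument ruling out further blocks is $X|_W=0$ via an isotropic vector in $W=H^\perp$. That argument comes afterwards and already presupposes that $H$ is nondegenerate, so the key claim is left unsupported.

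The paper closes this with an ingredient you already wrote down. $\dim(\operatorname{im}X\cap\ker X)$ equals the number of nontrivial Jordan blocks of $X$, so your second bullet with $k=1$ gives exactly one nontrivial block. Hence $V=\operatorname{span}\{z,\dots,X^{m-1}z\}+\ker X$. Since $X^mz\perp\ker X$ and $\ip{X^mz}{X^jz}=\pm\ip{z}{X^{m+j}z}=0$ for $j\ge1$, the vanishing of $\ip{X^mz}{z}$ would put the nonzero vector $X^mz$ in the radical of $V$, contradicting nondegeneracy. (Alternatively, you can choose $z$ directly with $\ip{z}{X^mz}\ne0$ inside one $T_s$-eigenspace. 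The form $(x,y)\mapsto\ip{x}{X^my}$ is nonzero and Hermitian or skew-Hermitian. It vanishes between distinct $T_s$-eigenspaces, and on eigenspaces off $\T$, since those are lines killed by $X$. Then $W=H^\perp>0$ and $X|_W=0$ follow as you say.)

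Three smaller repairs are also needed. First, $T_s$ and $T_u$ are unitary not because they are polynomials in $T$, but by uniqueness of the Jordan--Chevalley decomposition applied to $T=(T^\dagger)^{-1}$. Second, to place the chain in one eigenspace, write $z=\sum z_\lambda$ and keep a component with $X^mz_\lambda\ne0$; then $|\lambda|=1$ follows from nondegeneracy of $H$. Third, in case (c) replace $z$ by $z+sXz$ to make it null, which is possible because $\ip{Xz}{z}$ is nonzero and purely imaginary.
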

	
	\begin{proof}
		First suppose that $T$ is semisimple. If $Tv=\alpha v$ and
		$Tw=\beta w$, then
		\begin{equation}\label{eq:eigen-orthogonality}
			\ip{v}{w}
			=\ip{Tv}{Tw}
			=\overline\alpha\beta\,\ip{v}{w}.
		\end{equation}
		If all eigenvalues lie in $\T$, then eigenspaces corresponding to
		distinct eigenvalues are mutually orthogonal. Since $T$ is semisimple,
		their direct sum is $V$; since $V$ is nondegenerate, each eigenspace is
		nondegenerate. A Hermitian orthogonal basis may therefore be chosen in
		each eigenspace, which gives~\textup{(a)}.
		
		Suppose now that $|\alpha|\ne1$. For arbitrary $v,w\in E_\alpha$,
		Equation~\eqref{eq:eigen-orthogonality} gives
		$\ip{v}{w}=|\alpha|^2\ip{v}{w}$, so $E_\alpha$ is totally
		isotropic. Since the Witt index is $1$, we have
		$\dim E_\alpha=1$. By nondegeneracy of the Hermitian form, there are
		an eigenspace $E_\beta$ and vectors $v\in E_\alpha$ and
		$w\in E_\beta$ such that $\ip{v}{w}\ne0$.
		Equation~\eqref{eq:eigen-orthogonality} then implies
		$\beta=\overline\alpha^{-1}$, and this eigenspace is again
		one-dimensional. Hence
		\[
		H=E_\alpha\oplus E_{\overline\alpha^{-1}}
		\]
		is a Hermitian plane of signature $(1,1)$. If there were another pair
		of eigenvalues off the unit circle, it would yield another null line
		orthogonal to $E_\alpha$, producing a two-dimensional totally
		isotropic subspace and contradicting the fact that the Witt index is
		$1$. Therefore $H^\perp>0$, which proves~\textup{(b)}.
		
		It remains to consider the case in which $T$ is not semisimple. Take
		the multiplicative Jordan--Chevalley decomposition
		\[
		T=SU=US,
		\]
		where $S$ is semisimple and $U$ is unipotent. Let $A^\dagger$ denote
		the adjoint with respect to $\ip{\cdot}{\cdot}$, so that
		$\ip{Ax}{y}=\ip{x}{A^\dagger y}$. Since $T\in\UU(n,1)$, we have
		$T^\dagger=T^{-1}$. The uniqueness of the Jordan--Chevalley
		decomposition then gives
		\[
		S^\dagger=S^{-1},\qquad U^\dagger=U^{-1}.
		\]
		Set
		\[
		Y=\log U
		=\sum_{j\ge1}\frac{(-1)^{j+1}}{j}(U-\Id)^j.
		\]
		This sum is finite because $U-\Id$ is nilpotent. Thus $Y$ is
		nilpotent, $SY=YS$, and
		\begin{equation}\label{eq:Y-skew}
			Y^\dagger=-Y,
			\qquad
			\ip{Yx}{y}=-\ip{x}{Yy}.
		\end{equation}
		
		Consider $K=\im Y\cap\ker Y$. If $x=Ya$ and $y=Yb$ belong to $K$,
		then
		\[
		\ip{x}{y}=\ip{Ya}{Yb}=-\ip{a}{Y^2b}=0.
		\]
		Hence $K$ is totally isotropic. For every nontrivial Jordan block of
		$Y$, the space $\im Y\cap\ker Y$ restricted to that block is
		one-dimensional, whereas a trivial Jordan block contributes nothing to
		this intersection. Consequently,
		$\dim_{\C}(\im Y\cap\ker Y)$ equals the number of nontrivial Jordan
		blocks of $Y$. Since the Witt index is $1$, the operator $Y$ has exactly
		one nontrivial Jordan block.
		
		Let $q$ be the length of this block and choose a cyclic vector $u$ such
		that
		\[
		Y^{q-1}u\ne0,\qquad Y^qu=0.
		\]
		Repeated use of~\eqref{eq:Y-skew} gives
		\begin{equation}\label{eq:Y-chain-pairing}
			\ip{Y^iu}{Y^ju}=(-1)^i\ip{u}{Y^{i+j}u}.
		\end{equation}
		If $q\ge4$, then $Y^{q-1}u$ and $Y^{q-2}u$ are linearly independent,
		while~\eqref{eq:Y-chain-pairing} shows that their norms and mutual
		inner product all vanish. They would therefore span a two-dimensional
		totally isotropic subspace, a contradiction. Thus $q\in\{2,3\}$.
		
		Since $SY=YS$ and $S$ is semisimple, $Y$ preserves every eigenspace
		of $S$. Write $V=\bigoplus_\lambda E_\lambda(S)$ and
		$u=\sum_\lambda u_\lambda$. Since $Y^{q-1}u\ne0$, there is at least
		one $\lambda$ for which $Y^{q-1}u_\lambda\ne0$. Replacing the original
		cyclic vector $u$ by this $u_\lambda$, we may arrange that the unique
		nontrivial Jordan chain lies entirely in $E_\lambda(S)$. Hence, on the
		active block
		\[
		H_q=\operatorname{span}\{u,Yu,\ldots,Y^{q-1}u\},
		\]
		we have $S=\lambda\Id$.
		
		Suppose first that $q=2$, and set $e_0=Yu$. Then $Ye_0=0$, and
		Equation~\eqref{eq:Y-skew} implies that $e_0\perp\ker Y$. Since
		$V=\C u\oplus\ker Y$ and $e_0\ne0$, nondegeneracy gives
		$\ip{e_0}{u}\ne0$. Moreover,
		$\ip{e_0}{u}=-\overline{\ip{e_0}{u}}$, so this inner product is
		nonzero and purely imaginary. Choosing $s\in\C$ appropriately and
		replacing $u$ by $u+se_0$, we may also make $u$ a null vector without
		changing the Jordan-chain relation $Yu=e_0$. It follows that
		\[
		H=\operatorname{span}\{e_0,u\}
		\]
		is a Hermitian plane of signature $(1,1)$. In a normalized null basis,
		\[
		U|_H=
		\begin{pmatrix}
			1&it\\
			0&1
		\end{pmatrix},
		\qquad t\in\R\setminus\{0\}.
		\]
		Since $S|_H=\lambda\Id$ and $S$ is unitary, $|\lambda|=1$. The unique
		nontrivial Jordan block is already contained in $H$, so
		$Y|_{H^\perp}=0$ and hence $T|_{H^\perp}=S|_{H^\perp}$ is
		semisimple. This proves~\textup{(c)}.
		
		Suppose now that $q=3$, and set
		\[
		a=\ip{Y^2u}{u}.
		\]
		Then $Y^2u\perp\ker Y$ and $\ip{Y^2u}{Yu}=0$. Moreover,
		$V=\C u+\C Yu+\ker Y$. If $a=0$, the nonzero vector $Y^2u$ would be
		orthogonal to $u$, $Yu$, and $\ker Y$, contradicting nondegeneracy of
		the Hermitian form. Thus $a\ne0$. Since
		$(Y^2)^\dagger=Y^2$, we have $a\in\R$, and
		\[
		\ip{Yu}{Yu}=-a.
		\]
		In the basis $(Y^2u,Yu,u)$, the Gram matrix is
		\[
		\begin{pmatrix}
			0&0&a\\
			0&-a&*\\
			a&*&*
		\end{pmatrix},
		\]
		whose determinant is $a^3\ne0$. Therefore
		\[
		H_3=\operatorname{span}\{u,Yu,Y^2u\}
		\]
		is nondegenerate. The space $H_3$ contains a null line, and $V$ has
		negative index $1$; hence $\operatorname{sign}(H_3)=(2,1)$ and
		$H_3^\perp>0$. On $H_3$ we have $T=\lambda U$ with
		$|\lambda|=1$. After multiplying the entire lift by $\lambda^{-1}$,
		the restriction to the active block becomes a pure three-step unipotent
		operator $N$ satisfying
		\[
		(N-\Id)^3=0\ne(N-\Id)^2.
		\]
		Again, the unique nontrivial Jordan block is contained in $H_3$, so the
		restriction to the positive-definite orthogonal complement is
		semisimple. This proves~\textup{(d)}.
	\end{proof}
	
	\section{Point Rotations and the Four-Factor Lower Bound}\label{sec:lower}
	
	We use complex reflections in a point to construct the four-factor lower
	bound. Fix $p\in V^-$ and let $P_p$ be the Hermitian orthogonal projection
	onto the negative line $\C p$. For $\lambda\in\T$, define
	\begin{equation}\label{eq:point-rotation}
		A_\lambda=\lambda\Id+(1-\lambda)P_p.
	\end{equation}
	With respect to the orthogonal decomposition
	\[
	V=p^\perp\perp \C p,
	\]
	we have $P_p|_{p^\perp}=0$ and $P_p|_{\C p}=\Id$, and therefore
	\[
	A_\lambda|_{p^\perp}=\lambda\Id,
	\qquad
	A_\lambda|_{\C p}=\Id.
	\]
	Consequently,
	\[
	\spec(A_\lambda)=\{\lambda^{(n)},1^{(1)}\},
	\qquad
	\det A_\lambda=\lambda^n.
	\]
	When $\lambda\ne1$, the $\lambda$-eigenspace $p^\perp$ is positive
	definite, whereas the $1$-eigenspace $\C p$ is a negative line.
	
	We first establish a two-factor obstruction valid for every $n\ge2$. It will
	also be used in Section~\ref{sec:sqrt-conclusion} to prove optimality of the
	square-root constant.
	
	\begin{lemma}\label{lem:point-not-two}
		For every $n\ge2$, if $\lambda^2\ne1$, then
		$[A_\lambda]\in\PU(n,1)$ is not a product of two involutions.
	\end{lemma}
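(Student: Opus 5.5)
We argue by contradiction, using the projective lifting of Lemma~\ref{lem:projective-lifting}(ii). Suppose $[A_\lambda]\in\mathcal I^{\le2}$ with $\lambda^2\ne1$. Then there are a linear unitary involution $J\in\UU(n,1)$ and $z\in\T$ with
\[
JA_\lambda J=zA_\lambda^{-1}.
\]
So $A_\lambda$ and $zA_\lambda^{-1}$ are conjugate, and in particular they have the same spectral multiset, with the same signature data on eigenspaces. Since $\spec(A_\lambda)=\{\lambda^{(n)},1^{(1)}\}$, we get
\[
\spec(zA_\lambda^{-1})=\{(z\bar\lambda)^{(n)},z^{(1)}\}.
\]

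Next we compare multiplicities. Because $n\ge2$ and $\lambda\ne1$, the eigenvalue $\lambda$ is the unique eigenvalue of multiplicity $n\ge2$ in $\spec(A_\lambda)$, and $1$ is the unique simple eigenvalue. In $\spec(zA_\lambda^{-1})$, the two values $z\bar\lambda$ and $z$ are distinct, since $\lambda\ne1$. So $z\bar\lambda$ occurs with multiplicity $n$ and $z$ with multiplicity $1$. Matching multisets forces
\[
z\bar\lambda=\lambda,\qquad z=1.
\]
Hence $\lambda^2=z=1$, which contradicts the hypothesis. This simple multiplicity comparison already settles the case $n\ge2$, because $n\ne1$ separates the two eigenvalues by multiplicity. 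Alternatively, conjugation by $J$ preserves the Hermitian form, so it maps the negative-line eigenspace $\C p$ (eigenvalue $1$) to the negative eigenspace of $zA_\lambda^{-1}$ (eigenvalue $z$). That also gives $z=1$, and then comparing the positive-definite eigenvalues gives $\lambda=\bar\lambda$, i.e.\ $\lambda^2=1$.

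The only point needing care is making sure the projective statement really reduces to the linear relation with a scalar $z$, including the degenerate decompositions of length $0$ or $1$. Lemma~\ref{lem:projective-lifting}(ii) already covers these by padding with identities. So no step is a genuine obstacle; the main thing is to keep the scalar $z$ throughout instead of assuming genuine strong reversibility of the lift.
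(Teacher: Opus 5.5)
Your proof is correct and follows the paper's route: you lift via Lemma~\ref{lem:projective-lifting}\textup{(ii)} to get $JA_\lambda J=zA_\lambda^{-1}$ and then compare spectra. Your main matching step uses multiplicities, which is exactly where $n\ge2$ is needed. The paper instead matches eigenvalues by the Hermitian type of their eigenspaces, which is your stated alternative and does not depend on $n$.
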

	
	\begin{proof}
		If $[A_\lambda]$ were a product of two involutions, then
		Lemma~\ref{lem:projective-lifting}\textup{(ii)} would give a unitary
		involution $J$ and $z\in\T$ such that
		\[
		JA_\lambda J=zA_\lambda^{-1}.
		\]
		Unitary conjugation preserves the Hermitian type of each eigenspace.
		Thus the unique negative-type eigenline on the left still corresponds
		to the eigenvalue $1$, whereas the eigenvalue on the negative line
		$\C p$ on the right is $z$. Hence $z=1$. It follows that
		$A_\lambda$ and $A_\lambda^{-1}$ have the same positive-type
		spectrum, so
		\[
		\lambda=\lambda^{-1},
		\]
		contrary to $\lambda^2\ne1$.
	\end{proof}
	
	For $n\ge4$, the rank-one perturbation structure of $A_\lambda$ can be
	combined directly with the spectral pairing property of a product of two
	involutions to rule out a three-factor decomposition. The following argument
	uses only geometric multiplicities and does not require the unitary
	transformations involved to be semisimple.
	
	\begin{proposition}\label{prop:lower-high}
		If $n\ge4$ and $\lambda^4\ne1$, then $[A_\lambda]$ is not a product of
		at most three projective holomorphic involutions.
	\end{proposition}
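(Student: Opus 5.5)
The plan is to argue by contradiction, peeling off one involution and reducing to a spectral statement about a product of two involutions. Since $\lambda^4\ne1$ implies $\lambda^2\ne1$, Lemma~\ref{lem:point-not-two} already excludes decompositions of length at most two. Length $0$ is also excluded, since $A_\lambda$ is not scalar. So assume $[A_\lambda]=[I_E][B]$ with $[B]\in\mathcal I^{\le2}$, where $I_E=\Id-2P_E$ is a standard involution with $E>0$ and $k:=\dim E\ge1$. Lifting and absorbing scalars, we may write $B=\tau I_EA_\lambda$ with $\tau\in\T$. By Lemma~\ref{lem:projective-lifting}\textup{(ii)} there are a unitary involution $J$ and $z\in\T$ with $JBJ=zB^{-1}$. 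Put $M=I_EA_\lambda$. Then $M$ is conjugate to $z'M^{-1}$ with $z'=z\tau^{-2}$. Hence the map $\sigma(\mu)=z'/\mu$ permutes the eigenvalues of $M$ and preserves both algebraic and geometric multiplicities. No semisimplicity is needed for this.

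Next I would exploit the rank-one structure to produce large eigenspaces of $M$. On $p^\perp\cap E^\perp$, of dimension at least $n-k$, we have $M=\lambda$. On $p^\perp\cap E$, of dimension at least $k-1$ because $p^\perp$ has codimension one, we have $M=-\lambda$. Let $a$ and $b$ denote the geometric multiplicities of $\lambda$ and $-\lambda$. Then $a+b\ge n-1$, and at most $n+1-a-b\le2$ dimensions of generalized eigenspace are left for all other eigenvalues. I also record the determinant $\det M=(-1)^k\lambda^n$.

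The case analysis is on $\sigma(\lambda)$, which lies in $\{z'/\lambda\}$.
\begin{enumerate}
\item \textbf{$\sigma(\lambda)=\lambda$, that is, $z'=\lambda^2$.} Then $\sigma$ also fixes $-\lambda$, and the at most two remaining eigenvalues are either $\pm\lambda$ or a $\sigma$-pair with product $\lambda^2$. Every eigenvalue of $M$ is then $\pm\lambda$ or belongs to a pair with product $\lambda^2$, so $\det M=\pm\lambda^{n+1}$. Comparing with $(-1)^k\lambda^n$ gives $\lambda=\pm1$, a contradiction.
\item \textbf{$\sigma(\lambda)=-\lambda$, that is, $z'=-\lambda^2$.} Then $\sigma$ swaps $\lambda$ and $-\lambda$, and the leftover eigenvalues are fixed points $\mu$ with $\mu^2=-\lambda^2$ or pairs with product $-\lambda^2$. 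In either situation $(\det M)^2=(-\lambda^2)^{n+1}$. Comparing with $\lambda^{2n}$ gives $\lambda^2=\pm1$, contradicting $\lambda^4\ne1$.
\item \textbf{$\sigma(\lambda)\notin\{\pm\lambda\}$.} Then $\sigma(\lambda)$ is a further eigenvalue of geometric multiplicity $a$, and likewise $\sigma(-\lambda)\notin\{\pm\lambda\}$ has multiplicity $b$. Counting dimensions, at most two dimensions lie outside the $(\pm\lambda)$-eigenspaces, so $a+b\le2$ (or $\max(a,b)\le 2$ with the smaller one zero if the two images coincide). Together with $a+b\ge n-1\ge3$ this is a contradiction precisely when $n\ge4$.
\end{enumerate}

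The main obstacle is making this case analysis airtight. In particular, the algebraic multiplicities of the leftover eigenvalues must be handled correctly when $M$ is not semisimple, and in case 3 one must allow $\sigma(\lambda)$ or $\sigma(-\lambda)$ to coincide with one of the leftover eigenvalues. I would control both by working with the generalized-eigenspace decomposition, whose total dimension is $n+1$, rather than with eigenvalues alone. If necessary I would also use the Hermitian-type argument from Lemma~\ref{lem:point-not-two}, which shows that the negative-type eigenline must be carried to a negative-type eigenline, to pin down $z'$ directly.
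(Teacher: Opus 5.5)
Your proof is correct and follows essentially the paper's argument: peel off one involution, use the rank-one structure to get $(\pm\lambda)$-eigenspaces of total dimension at least $n-1$, pair eigenvalues via reversibility, and count dimensions over four distinct eigenvalues. The only difference is bookkeeping: the paper fixes exact linear lifts $I_1I_2J=\zeta A_\lambda$, so that $\det(I_1I_2)=\pm1$ forces $(\zeta\lambda)^4\ne1$ before the count, whereas your cases 1 and 2 exclude the same degenerate scalars $z'=\pm\lambda^2$ via $(\det M)^2=(z')^{n+1}$ (and your parenthetical about $\sigma(\lambda)=\sigma(-\lambda)$ is vacuous, since $\sigma$ is injective).
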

	
	\begin{proof}
		Suppose, to the contrary, that $[A_\lambda]$ is a product of at most
		three projective holomorphic involutions. Applying
		Lemma~\ref{lem:projective-lifting}\textup{(i)} to each factor, lift
		each projective involution to a linear unitary involution, and denote the
		three lifts by $I_1,I_2,J$. If there are fewer than three factors, pad
		the decomposition with identity maps only at the linear level. Thus
		$I_1^2=I_2^2=J^2=\Id$, and there exists $\zeta\in\T$ such that
		\[
		X:=I_1I_2J=\zeta A_\lambda.
		\]
		Every linear unitary involution has determinant $\pm1$, so
		$\det X=\pm1$. Put $d=n+1$ and $\beta=\zeta\lambda$. Since
		$\det A_\lambda=\lambda^{d-1}$,
		\[
		\beta^d=(\zeta\lambda)^d=\pm\lambda.
		\]
		If $\beta^4=1$, then $\lambda=\pm\beta^d$ would also imply
		$\lambda^4=1$, contrary to the assumption. Therefore
		$\beta^4\ne1$.
		
		Set
		\[
		D=XJ=I_1I_2.
		\]
		Using $A_\lambda=\lambda\Id+(1-\lambda)P_p$, we may write
		\[
		X=\beta\Id+(\zeta-\beta)P_p,
		\qquad
		D=\beta J+(\zeta-\beta)P_pJ.
		\]
		Let
		\[
		a=\dim V_+(J),\qquad b=\dim V_-(J),\qquad a+b=d.
		\]
		If $v\in V_+(J)\cap p^\perp$, then $Dv=Xv=\beta v$. If
		$v\in V_-(J)\cap p^\perp$, then $Dv=-Xv=-\beta v$. Since
		$p^\perp$ has codimension $1$,
		\[
		\dim\bigl(V_\pm(J)\cap p^\perp\bigr)
		\ge \dim V_\pm(J)-1.
		\]
		Writing $m_D(t)=\dim\kerx(D-t\Id)$, we obtain
		\[
		m_D(\beta)\ge a-1,
		\qquad
		m_D(-\beta)\ge b-1,
		\]
		and hence
		\begin{equation}\label{eq:rank-one-multiplicity-lower}
			m_D(\beta)+m_D(-\beta)\ge d-2.
		\end{equation}
		On the other hand, $D=I_1I_2$ gives
		\[
		I_1DI_1=D^{-1}.
		\]
		Thus the eigenvalues $t$ and $t^{-1}$ have the same geometric
		multiplicity. In particular,
		\[
		m_D(\beta^{-1})=m_D(\beta),
		\qquad
		m_D(-\beta^{-1})=m_D(-\beta).
		\]
		Since $\beta^4\ne1$, the four numbers
		$\beta,-\beta,\beta^{-1},-\beta^{-1}$ are pairwise distinct, and the
		corresponding four eigenspaces form a direct sum. Combining this with
		Equation~\eqref{eq:rank-one-multiplicity-lower}, we get
		\[
		d\ge
		m_D(\beta)+m_D(-\beta)+m_D(\beta^{-1})+m_D(-\beta^{-1})
		\ge2(d-2).
		\]
		Therefore $d\le4$, contradicting $d=n+1\ge5$.
	\end{proof}
	
	The preceding dimension estimate gives no contradiction when $n=3$, so
	this critical dimension must be treated separately. In this case
	$\dim_{\C}V=4$. The standard involutions $I_E=\Id-2P_E$ fall into three
	types according to
	\[
	k=\dim E\in\{1,2,3\};
	\]
	we call such an involution an involution of type $k$. For
	$\lambda\ne\pm1$, the element $[A_\lambda]$ is neither the identity nor an
	involution, and Lemma~\ref{lem:point-not-two} also rules out a two-factor
	decomposition. It therefore remains only to analyze the possible types of
	three nontrivial involution factors.
	
	\begin{proposition}\label{prop:lower-n3}
		If $\lambda\ne\pm1$, then $[A_\lambda]\in\PU(3,1)$ is not a product
		of at most three holomorphic involutions.
	\end{proposition}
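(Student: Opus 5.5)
We argue as in Proposition~\ref{prop:lower-high}, but in the critical dimension $d=n+1=4$ the dimension count only pins down the spectrum of $D$. We therefore extract the involution types from it and then use a Hermitian trace identity. Suppose $[A_\lambda]$ is a product of at most three holomorphic involutions. By Lemma~\ref{lem:point-not-two}, and since $[A_\lambda]$ is neither the identity nor an involution when $\lambda\ne\pm1$, there are exactly three nontrivial factors. By Lemma~\ref{lem:projective-lifting}(i) and Lemma~\ref{lem:normalise-involution} we lift them to standard involutions $I_{E_1},I_{E_2},I_{E_3}$ with $E_j>0$ and $k_j=\dim E_j\in\{1,2,3\}$, so that $X:=I_{E_1}I_{E_2}I_{E_3}=\zeta A_\lambda$. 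Put $\beta=\zeta\lambda$. As before, $\beta^4=\pm\lambda$, because $\det X=\pm1$ and $\det A_\lambda=\lambda^3$. Hence $\beta^4=1$ would force $\lambda^2=1$, which is excluded; so $\beta^4\ne1$ holds automatically for $n=3$. Moreover, the cyclic rotations $I_{E_3}I_{E_1}I_{E_2}$ and $I_{E_2}I_{E_3}I_{E_1}$ are conjugates of $X$. They are $\zeta$ times point rotations (at $I_{E_3}p$, respectively at $I_{E_2}I_{E_3}p$) with the same $\lambda$. So every argument below applies to each cyclically consecutive pair.

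\emph{Step 1: spectrum of $D$.} With $J=I_{E_3}$ and $D=XJ=I_{E_1}I_{E_2}$, the proof of Proposition~\ref{prop:lower-high} gives $m_D(\pm\beta^{\pm1})\ge a-1,\,b-1$ and $2(a-1)+2(b-1)=4=d$. Equality is therefore forced, so $D$ is diagonalizable with spectrum exactly $\beta^{(a-1)},(\beta^{-1})^{(a-1)},(-\beta)^{(b-1)},(-\beta^{-1})^{(b-1)}$. In particular $1$ is not an eigenvalue of $D$, since $\beta^2\ne1$.

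\emph{Step 2: forcing the types $(2,2,2)$.} $D=I_{E_1}I_{E_2}$ is the identity on $E_1^\perp\cap E_2^\perp$, which has dimension $\ge4-k_1-k_2$, and on $E_1\cap E_2$, which has dimension $\ge k_1+k_2-4$. Since $1\notin\spec D$, we get $k_1+k_2=4$. Applying the same argument to the rotated products gives $k_2+k_3=k_3+k_1=4$, hence $k_1=k_2=k_3=2$. Then $J$ has $a=b=2$, and Step 1 gives $\spec D=\{\beta,\beta^{-1},-\beta,-\beta^{-1}\}$, so $\tr D=0$.

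\emph{Step 3: trace contradiction.} This is the step I expect to carry the real content. Using $D=\beta J+(\zeta-\beta)P_pJ$ and $\tr J=0$, which holds since $k_3=2$, we compute
\[
\tr D=(\zeta-\beta)\,\frac{\ip{p}{Jp}}{\ip{p}{p}}.
\]
Since $\zeta-\beta=\zeta(1-\lambda)\ne0$, it follows that $\ip{p}{Jp}=0$. But $\ip{p}{Jp}=\ip{p}{p}-2\ip{P_{E_3}p}{P_{E_3}p}$, which gives $\ip{P_{E_3}p}{P_{E_3}p}=\tfrac12\ip{p}{p}<0$. This contradicts $E_3>0$ and completes the argument. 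The points to verify carefully are the trace of the rank-one term, $\tr(P_pJ)=\ip{p}{Jp}/\ip{p}{p}$, and the legitimacy of cycling the factors: a conjugate of $\zeta A_\lambda$ is again $\zeta$ times a point rotation with the same $\lambda$, so Step 1 applies to each cyclic rotation.
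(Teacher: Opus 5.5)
Your proof is correct, and it organizes the case analysis differently from the paper. The paper splits on whether some factor has type $2$. If one does, it cycles that factor to the end and compares $\tr D$ with $\tr D^{-1}$, using $\kappa=\tr(P_pJ)\ge1$, to get $\zeta^2=-\lambda^{-1}$; then $\det D=\lambda$ contradicts $\det D=\pm1$. If none does, it rewrites the product as three rank-one reflections, takes a common fixed vector in $q_1^\perp\cap q_2^\perp\cap q_3^\perp$, and splits into null, positive and negative cases, the last reducing to a computation on the positive-definite space $p^\perp$.

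You instead push the multiplicity count of Proposition~\ref{prop:lower-high} into its equality case at $d=4$. This pins down $\spec D$ exactly and, in particular, excludes the eigenvalue $1$. The fixed subspaces $E_1^\perp\cap E_2^\perp$ and $E_1\cap E_2$ must therefore vanish, and cycling forces all three factors to have type $2$. That removes the paper's entire rank-one-reflection case in one step. Your final contradiction, $\tr D=0$ forcing $\ip{p}{Jp}=0$, rests on the same positivity fact that gives $\kappa\ge1$ in the paper.

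What each route buys: yours stays uniform with the $n\ge4$ argument and shows why $n=3$ is critical, since the estimate becomes sharp rather than failing. The paper's type-$2$ case needs only a trace and a determinant, with no appeal to $\beta^4\ne1$ or to the rigidity of the equality case. Its remaining case is a direct geometric argument independent of the dimension count.

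The two points you flagged both check out. First, $\tr(P_pJ)=\ip{p}{Jp}/\ip{p}{p}$. Second, the cyclic rotations are $\zeta$ times point rotations at $I_{E_3}p$ and $I_{E_2}I_{E_3}p$ with the same $\zeta$ and $\lambda$, hence the same $\beta$.
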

	
	\begin{proof}
		Since $\lambda\ne\pm1$, the element $[A_\lambda]$ is neither the
		identity nor an involution, and Lemma~\ref{lem:point-not-two} rules out
		a two-factor decomposition. It is therefore enough to rule out a
		decomposition into exactly three nontrivial holomorphic involutions.
		Suppose, to the contrary, that
		\begin{equation}\label{eq:n3triple}
			I_1I_2I_3=\zeta A_\lambda,
			\qquad \zeta\in\T.
		\end{equation}
		We first choose linear involution lifts by
		Lemma~\ref{lem:projective-lifting}\textup{(i)} and then normalize them
		to standard linear lifts using Lemma~\ref{lem:normalise-involution};
		the central scalars $-1$ introduced by this normalization are absorbed
		into $\zeta$.
		
		Suppose first that at least one of the three factors is of type $2$,
		and denote it by
		\[
		J=\Id-2P_E,
		\qquad \dim E=2.
		\]
		A cyclic permutation of the three factors only conjugates their total
		product, while a unitary conjugate of a point rotation with parameter
		$\lambda$ is still a point rotation with parameter $\lambda$, with only
		its fixed negative line changed. After renaming this negative line
		$\C p$, we may therefore assume that $J=I_3$. Put $D=I_1I_2$. Then
		\[
		D=\zeta A_\lambda J,
		\]
		and
		\[
		\tr D=\zeta\tr(A_\lambda J),
		\qquad
		\tr D^{-1}=\zeta^{-1}\tr(A_\lambda^{-1}J).
		\]
		Since $D$ is a product of two involutions, it is similar to $D^{-1}$,
		so these two traces are equal.
		
		A type-$2$ involution in four dimensions has eigenvalues
		$1,1,-1,-1$, and hence $\tr J=0$ and $\det J=1$. Set
		\[
		\kappa:=\tr(P_pJ).
		\]
		From
		\[
		A_\lambda=\lambda\Id+(1-\lambda)P_p,
		\qquad
		A_\lambda^{-1}=\lambda^{-1}\Id+(1-\lambda^{-1})P_p,
		\]
		we obtain
		\[
		\tr(A_\lambda J)=(1-\lambda)\kappa,
		\qquad
		\tr(A_\lambda^{-1}J)=(1-\lambda^{-1})\kappa.
		\]
		Moreover,
		\[
		\kappa=1-2\tr(P_pP_E)
		=1-2\frac{\langle P_Ep,P_Ep\rangle}{\langle p,p\rangle}\ge1,
		\]
		because $E>0$ and $p<0$. Thus $\kappa\ne0$. Since $\lambda\ne1$
		and the two traces are equal, we obtain
		\[
		\zeta^2=-\lambda^{-1},
		\qquad
		\zeta^4=\lambda^{-2}.
		\]
		On the other hand,
		\[
		\det D
		=\zeta^4\det(A_\lambda)\det J
		=\zeta^4\lambda^3
		=\lambda.
		\]
		But $D$ is a product of two linear involutions, so
		$\det D\in\{\pm1\}$. Hence $\lambda=\pm1$, a contradiction.
		
		Now suppose that none of the three factors is of type $2$. A type-$1$
		factor is a reflection in a positive line, whereas multiplication of a
		type-$3$ factor by the central scalar $-1$ turns it into a reflection
		in a negative line. Absorbing these central minus signs into $\zeta$,
		we may rewrite~\eqref{eq:n3triple} as
		\begin{equation}\label{eq:rankone-triple}
			R_1R_2R_3=\zeta A_\lambda,
			\qquad
			R_j=\Id-2P_{q_j},
			\quad \langle q_j,q_j\rangle\ne0.
		\end{equation}
		Each $R_j$ fixes the hyperplane $q_j^\perp$ pointwise. Since $V$ has
		complex dimension four, the intersection of these three hyperplanes,
		\[
		L=q_1^\perp\cap q_2^\perp\cap q_3^\perp,
		\]
		has dimension at least one. Choose $0\ne v\in L$. Then
		$R_1R_2R_3v=v$. With respect to
		\[
		V=p^\perp\perp\C p,
		\]
		write
		\[
		v=v_++v_-,
		\qquad v_+\in p^\perp,
		\quad v_-\in\C p.
		\]
		Equation~\eqref{eq:rankone-triple} and
		$A_\lambda v=\lambda v_++v_-$ give
		\begin{equation}\label{eq:v-components}
			(\zeta\lambda-1)v_++(\zeta-1)v_-=0.
		\end{equation}
		
		If $v$ is a nonzero null vector, then both $v_+$ and $v_-$ are
		nonzero. The orthogonal direct sum decomposition and
		Equation~\eqref{eq:v-components} force
		\[
		\zeta\lambda=1,
		\qquad \zeta=1,
		\]
		so $\lambda=1$, a contradiction.
		
		If $v$ is positive, then $v_+\ne0$, and hence
		$\zeta\lambda=1$. Every rank-one reflection has determinant $-1$;
		therefore
		\[
		-1=\det(R_1R_2R_3)=\zeta^4\lambda^3=\lambda^{-1},
		\]
		which gives $\lambda=-1$, again a contradiction.
		
		Finally, suppose that $v$ is negative. Then $v_-\ne0$, so
		Equation~\eqref{eq:v-components} gives $\zeta=1$; since
		$\lambda\ne1$, it also gives $v_+=0$. Thus $v\in\C p$, and hence
		$p\in L$. If some $q_j$ were negative, then $q_j^\perp$ would be a
		three-dimensional positive-definite subspace and could not contain the
		negative vector $p$. Thus every $q_j$ is positive and
		$q_j\in p^\perp$. Restricting~\eqref{eq:rankone-triple} to the
		positive-definite three-dimensional space $p^\perp$, we obtain
		\[
		R_1R_2R_3=\lambda\Id_{p^\perp},
		\qquad
		R_1R_2=\lambda R_3.
		\]
		Since $q_1^\perp\cap q_2^\perp\cap p^\perp$ has dimension at least
		one, there is a nonzero vector fixed by both $R_1$ and $R_2$, and hence
		$1$ is an eigenvalue of $R_1R_2$. On the other hand, the spectrum of
		$R_3$ on $p^\perp$ is $\{-1,1,1\}$, so the spectrum of
		$\lambda R_3$ is
		\[
		\{-\lambda,\lambda,\lambda\}.
		\]
		It follows that $1\in\{-\lambda,\lambda\}$, and once again
		$\lambda=\pm1$, a contradiction.
		
		Both possibilities have been excluded, so no three-factor
		decomposition exists.
	\end{proof}
	
	In summary, the case $n\ge4$ is handled by the rank-one perturbation and
	spectral pairing argument, whereas the critical case $n=3$ is treated
	separately by the four-dimensional Hermitian argument above. This gives a
	uniform four-factor lower bound.
	
	\begin{corollary}\label{cor:lower}
		For example, take $\lambda=e^{i\pi/3}$. Then, for every $n\ge3$,
		\[
		\linv([A_\lambda])\ge4.
		\]
		Consequently, $\linv(\PU(n,1))\ge4$.
	\end{corollary}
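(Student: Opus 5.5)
The plan is to deduce Corollary~\ref{cor:lower} directly from Proposition~\ref{prop:lower-high} for $n\ge4$ and from Proposition~\ref{prop:lower-n3} for $n=3$. The only thing to check is that $\lambda=e^{i\pi/3}$ meets the hypotheses of both propositions.

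First, the hypotheses. We have $\lambda^2=e^{2i\pi/3}\ne1$, so $\lambda\ne\pm1$. Also $\lambda^4=e^{4i\pi/3}\ne1$, since $4\pi/3$ is not a multiple of $2\pi$. (A primitive sixth root of unity satisfies $\lambda^4=1$ only if $6\mid4$, which is false.) So both conditions $\lambda^4\ne1$ and $\lambda\ne\pm1$ hold.

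Second, the cases. For $n\ge4$, Proposition~\ref{prop:lower-high} shows that $[A_\lambda]$ is not a product of at most three holomorphic involutions. For $n=3$, Proposition~\ref{prop:lower-n3} gives the same conclusion.

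In the statement ``at most three'' includes the empty product and products of one or two factors. Hence $[A_\lambda]\notin\mathcal I^{\le3}$, that is, $\linv([A_\lambda])\ge4$.

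Finally, $\linv(\PU(n,1))$ is a supremum over all elements, so $\linv(\PU(n,1))\ge\linv([A_\lambda])\ge4$.

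There is essentially no obstacle here; all the substance lies in the two propositions. The one point to watch is that the element is nontrivial, so that $\linv$ is a genuine minimum and is not assigned length $0$. This holds because $\lambda\ne1$ makes $A_\lambda$ non-scalar, and the propositions already exclude length $0$ and length $1$.
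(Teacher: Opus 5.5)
Your proposal is correct and is the same as the paper's proof: you check that $\lambda=e^{i\pi/3}$ satisfies $\lambda^4\ne1$ and $\lambda\ne\pm1$, then apply Proposition~\ref{prop:lower-high} for $n\ge4$ and Proposition~\ref{prop:lower-n3} for $n=3$. Your remarks about the supremum and about excluding lengths $0$ and $1$ are accurate, though both propositions already cover those cases.
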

	
	\begin{proof}
		We have $\lambda^4\ne1$ and $\lambda\ne\pm1$. Apply
		Proposition~\ref{prop:lower-high} when $n\ge4$ and
		Proposition~\ref{prop:lower-n3} when $n=3$.
	\end{proof}
	
	\section{Three-Involution Square Roots and the Spectral Pairing Criterion}
	\label{sec:sqrt-tools}
	
	This section develops the algebraic and spectral tools needed below. The main
	strategy is to construct a square root $R$ and then find a unitary involution
	$K$ such that $[RK]$ is a product of at most two holomorphic involutions.
	Since $R=(RK)K$, it follows that $[R]$ is a product of at most three
	holomorphic involutions. The identity for the square of a product of three
	involutions then shows that $[R^2]$ is a product of at most four holomorphic
	involutions.
	
	We first define square-root involution length and give the elementary
	algebraic reductions needed for three-involution square roots.
	
	\begin{definition}\label{def:root-involution-length}
		For $g\in\PU(n,1)$, define
		\[
		\rootlinv(g)
		=
		\inf\{\linv(h):h\in\PU(n,1),\ h^2=g\}
		\in\mathbb Z_{\ge0}\cup\{+\infty\},
		\]
		where, as before, $\inf\varnothing=+\infty$. If $g$ has a square
		root of finite involution length, then this infimum is in fact a
		minimum. The \emph{square-root involution length} of the group is
		defined by
		\[
		\rootlinv(\PU(n,1))
		=
		\sup_{g\in\PU(n,1)}\rootlinv(g).
		\]
	\end{definition}
	
	\begin{lemma}\label{lem:three-square-four}
		Suppose that $a^2=b^2=e^2=1$, where some of the factors are allowed
		to be the identity. Then
		\[
		(abe)^2=(aba)(aea)be=[ab,eb]_{\mathrm c}.
		\]
		In particular, if $h^2=g$ and $\linv(h)\le3$, then
		\[
		\linv(g)\le4.
		\]
	\end{lemma}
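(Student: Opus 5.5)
The plan is to verify the two group-theoretic identities directly, using only $a^2=b^2=e^2=1$, and then read off the length bound.

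\emph{First identity.} Expand $(abe)^2=abeabe$. Inserting $aa=1$ between $e$ and $a$ is not needed; instead I will compare with the middle expression directly:
\[
(aba)(aea)be = ab(aa)ea\,be = ab\,e\,a\,b\,e=(abe)^2 .
\]
The middle $aa$ cancels, and this is the whole check.

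\emph{Second identity.} Write $x=ab$ and $y=eb$. Then $x^{-1}=b^{-1}a^{-1}=ba$ and $y^{-1}=be$, so
\[
[x,y]_{\mathrm c}=xyx^{-1}y^{-1}=ab\,eb\,ba\,be=abe(bb)abe=abeabe=(abe)^2 .
\]
None of this uses $a,b,e\neq1$, so identity factors are allowed.

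\emph{Length consequence.} If $h^2=g$ and $\linv(h)\le3$, write $h=abe$ with $a,b,e\in\mathcal I\cup\{1\}$, padding with identities when $\linv(h)<3$. By the first identity,
\[
g=h^2=(aba)(aea)\,b\,e .
\]
Each of $aba$ and $aea$ is a conjugate of an involution (or equals $1$). A conjugate of a nontrivial involution is again a nontrivial involution, since $(aba)^2=ab^2a=1$ and $aba\neq1$ when $b\neq1$. So $g$ is a product of four elements of $\mathcal I\cup\{1\}$. Deleting the identity factors shows $\linv(g)\le4$.

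There is no real obstacle; the only point needing care is the padding and deletion of trivial factors in the definition of $\linv$, which is handled above.
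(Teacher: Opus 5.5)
Your proposal is correct and follows the paper's proof essentially verbatim. You cancel the middle $aa$ in $(aba)(aea)be$, expand $[ab,eb]_{\mathrm c}=ab\,eb\,ba\,be$ using $b^2=1$, and pad $h=abe$ with identities before deleting trivial factors from $(aba)(aea)be$.
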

	
	\begin{proof}
		Since $a^2=1$,
		\[
		(aba)(aea)be
		=abea\,be
		=(abe)^2.
		\]
		Here $aba$ and $aea$ are conjugates of $b$ and $e$, respectively,
		and are therefore involutions or the identity. On the other hand, from
		$a^2=b^2=e^2=1$ we obtain
		\[
		[ab,eb]_{\mathrm c}
		=(ab)(eb)(ba)(be)
		=abeabe.
		\]
		If $\linv(h)\le3$, pad a decomposition with identities to write
		\[
		h=abe,\qquad a,b,e\in\mathcal I\cup\{1\}.
		\]
		The identity above expresses $g=h^2$ as a product of at most four
		nontrivial holomorphic involutions, so $\linv(g)\le4$.
	\end{proof}
	
	\begin{lemma}\label{lem:two-square-closed}
		If $h\in\mathcal I^{\le2}$, then $h^2\in\mathcal I^{\le2}$.
	\end{lemma}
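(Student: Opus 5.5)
The plan is to prove the group-theoretic fact that squares of products of two involutions are again products of two involutions. By Lemma~\ref{lem:two-strong} this is the statement that if $h$ is strongly reversible via an involution then so is $h^2$. I will work directly in $\PU(n,1)$, treating the identity and involutions as degenerate cases, and without lifting to $\UU(n,1)$.

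First, the trivial cases. If $\linv(h)=0$ then $h=1$ and $h^2=1\in\mathcal I^{\le2}$. If $\linv(h)=1$ then $h$ is an involution, so $h^2=1$ again. The remaining case is $h=ab$ with $a,b\in\mathcal I$. Then
\[
h^2=abab=(aba)\,b.
\]
Here $aba$ is a conjugate of the nontrivial involution $b$, hence $aba\in\mathcal I$. This exhibits $h^2$ as a product of two elements of $\mathcal I$.

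If either factor were trivial we would simply delete it. That cannot happen here, since both factors are conjugates of nontrivial involutions. But $h^2$ itself may be the identity or an involution, and in either case the membership $h^2\in\mathcal I^{\le2}$ holds trivially, since $\mathcal I^{\le2}$ includes shorter products.

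There is no real obstacle. The only point to check is that conjugation in $\PU(n,1)$ preserves $\mathcal I$: $(aba)^2=ab^2a=1$, and $aba\ne1$ because $b\ne1$. One can also phrase this via Lemma~\ref{lem:two-strong}: if $aha=h^{-1}$, then $ah^2a=h^{-2}$, so $h^2=a(ah^2)$ with $(ah^2)^2=1$. The direct identity above is cleaner, and it avoids having to check separately that $ah^2$ is nontrivial.
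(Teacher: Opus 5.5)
Your proof is correct and follows essentially the same route as the paper. The paper pads with identities and writes $h^2=abab=a(bab)$, whereas you handle the degenerate cases separately and write $h^2=(aba)b$; both rest on the same observation that a conjugate of an involution is again an involution.
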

	
	\begin{proof}
		After padding with identities, write $h=ab$, where $a^2=b^2=1$.
		Then
		$h^2=abab=a(bab)$, and both $a$ and $bab$ are involutions or the
		identity. Hence $h^2\in\mathcal I^{\le2}$.
	\end{proof}
	
	\begin{lemma}\label{lem:sqrt-three-reduction}
		Let $G,R\in\UU(n,1)$ and $K\in\UU(n,1)$ satisfy
		\[
		G=R^2,\qquad K^2=\Id.
		\]
		If $[RK]$ is a product of at most two holomorphic involutions, then
		\[
		h=[R],\qquad h^2=[G],\qquad \linv(h)\le3.
		\]
	\end{lemma}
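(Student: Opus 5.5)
The argument is a direct unwinding of definitions together with the projective bookkeeping of Lemma~\ref{lem:projective-lifting} and Lemma~\ref{lem:normalise-involution}.

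First, since $G=R^2$ holds at the linear level, projectivization is a homomorphism $\UU(n,1)\to\PU(n,1)$, and so $h^2=[R]^2=[R^2]=[G]$. This gives the square-root relation.

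Second, we write $R=(RK)K$, which uses $K^2=\Id$, and hence
\[
[R]=[RK]\,[K].
\]
By hypothesis $[RK]\in\mathcal I^{\le2}$; choose a decomposition $[RK]=I_1\cdots I_j$ with $j\le2$ and $I_i\in\mathcal I$. For the factor $[K]$ there are two cases. If $K=\pm\Id$, then $[K]=1$, so $[R]=[RK]$ and $\linv(h)\le2\le3$. Otherwise $K$ is a linear unitary involution different from $\pm\Id$. Then $[K]^2=1$ and $[K]\ne1$, because the only scalar involutions in $\UU(n,1)$ are $\pm\Id$. Lemma~\ref{lem:normalise-involution} shows $[K]=[I_E]$ for a positive-definite $E$, so $[K]\in\mathcal I$.

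Third, concatenating the two decompositions gives $[R]=I_1\cdots I_j[K]$, a product of at most $j+1\le3$ elements of $\mathcal I$. Hence $\linv(h)\le3$.

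The only point needing care is checking that $[K]$ is either trivial or a genuine element of $\mathcal I$, so that we never count a trivial factor as a nontrivial involution. The scalar argument above settles this, so there is no real obstacle.
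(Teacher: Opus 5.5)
Your proposal is correct and follows essentially the same argument as the paper: you write $[R]=[RK][K]$, discard $[K]$ when it is trivial (that is, when $K=\pm\Id$) and otherwise count it as one nontrivial holomorphic involution, and you obtain $h^2=[G]$ because projectivization is a homomorphism. Your appeal to Lemma~\ref{lem:normalise-involution} is harmless but unnecessary, since $[K]^2=1$ and $[K]\ne1$ already place $[K]$ in $\mathcal I$ by definition.
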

	
	\begin{proof}
		Since $R=(RK)K$, we have $[R]=[RK][K]$. If $[K]\ne1$, then $[K]$
		is a nontrivial holomorphic involution because $K^2=\Id$; if
		$[K]=1$, this factor may simply be omitted. Since $[RK]$ is a product
		of at most two holomorphic involutions, $\linv([R])\le3$. Taking
		$h=[R]$, we have $h^2=[R]^2=[R^2]=[G]$.
	\end{proof}
	
	To construct the required unitary involution on a two-dimensional Hermitian
	space, we first record a reflection formula for vectors of equal norm.
	
	\begin{lemma}\label{lem:equal-norm-exchange}
		Suppose that $v,w$ satisfy
		\[
		\ip{v}{v}=\ip{w}{w},
		\qquad
		\ip{v}{w}\in\R.
		\]
		If $q=v-w$ is positive, then the reflection in the positive line
		\[
		J_q=\Id-2P_{\C q}
		\]
		satisfies $J_qv=w$. If $v=w$, take any unitary involution fixing $v$.
	\end{lemma}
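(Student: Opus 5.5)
This is the standard Householder-type computation. We write $J_q Z = Z - 2\frac{\ip{q}{Z}}{\ip{q}{q}}\,q$ for $q=v-w$ with $\ip{q}{q}>0$. By Lemma~\ref{lem:normalise-involution}, the map $I_{\C q}=\Id-2P_{\C q}$ is a unitary involution. The Hermitian orthogonal projection onto the positive line $\C q$ is $P_{\C q}Z=\ip{q}{Z}\ip{q}{q}^{-1}q$; this uses the convention that the form is conjugate-linear in the first variable. It then suffices to show that $2\ip{q}{v}=\ip{q}{q}$, because then
\[
J_qv=v-\frac{2\ip{q}{v}}{\ip{q}{q}}(v-w)=v-(v-w)=w .
\]

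We verify the identity by expanding both sides. On one side,
\[
\ip{q}{q}=\ip{v}{v}-\ip{v}{w}-\ip{w}{v}+\ip{w}{w}.
\]
The hypothesis $\ip{v}{w}\in\R$ gives $\ip{w}{v}=\overline{\ip{v}{w}}=\ip{v}{w}$. Combined with $\ip{v}{v}=\ip{w}{w}$, this yields $\ip{q}{q}=2\ip{v}{v}-2\ip{v}{w}$. On the other side,
\[
2\ip{q}{v}=2\ip{v}{v}-2\ip{w}{v}=2\ip{v}{v}-2\ip{v}{w}.
\]
The two expressions agree. The positivity of $q$ is used only to guarantee that $\ip{q}{q}\ne0$ and that $\C q$ is a positive line, so that $J_q$ is a standard involution $I_{\C q}$.

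In the degenerate case $v=w$, any unitary involution fixing $v$ works; the identity is one trivially, as is $I_E$ for any positive $E\perp v$. The only point needing care is the reality hypothesis. Without it, $\ip{w}{v}\ne\ip{v}{w}$ and the coefficient would not equal $1$, so the reflection would not carry $v$ exactly to $w$. I expect no genuine obstacle beyond keeping the sesquilinearity convention consistent.
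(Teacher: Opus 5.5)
Your proof is correct and follows the paper's argument: you use $\ip{w}{v}=\ip{v}{w}$ and $\ip{v}{v}=\ip{w}{w}$ to get $\ip{q}{q}=2\ip{q}{v}$, which gives $P_{\C q}v=\tfrac12 q$ and hence $J_qv=v-q=w$. One small correction: Lemma~\ref{lem:normalise-involution} runs in the opposite direction, so you should check directly (rather than cite that lemma) that $\Id-2P_{\C q}$ is a unitary involution; the conclusion $J_qv=w$ does not depend on this.
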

	
	\begin{proof}
		Since $\ip{v}{w}\in\R$, Hermitian symmetry gives
		\[
		\ip{w}{v}=\ip{v}{w}.
		\]
		Using $\ip{v}{v}=\ip{w}{w}$, we obtain
		\[
		\begin{aligned}
			\ip{q}{q}
			&=\ip{v-w}{v-w}\\
			&=2\bigl(\ip{v}{v}-\ip{w}{v}\bigr)
			=2\ip{q}{v}.
		\end{aligned}
		\]
		Therefore
		\[
		P_{\C q}v
		=q\,\frac{\ip{q}{v}}{\ip{q}{q}}
		=\frac12q,
		\]
		and hence
		\[
		J_qv=v-2P_{\C q}v=v-q=w.
		\]
		If $v=w$, it suffices to choose any unitary involution fixing $v$.
	\end{proof}
	
	We next give a phase-selection lemma and a lemma constructing a
	negative-type eigenline on a two-dimensional indefinite block.
	
	\begin{lemma}\label{lem:phase-grid}
		Let $\Delta\in\T$ and $d\in\mathbb Z_{\ge2}$. The $2d$ solutions of
		\[
		c^{2d}=\Delta,\qquad c\in\T,
		\]
		are equally spaced on the unit circle, and adjacent solutions have
		arguments differing by $\pi/d$. Consequently, every open arc of
		length strictly greater than $\pi/d$ contains a solution.
	\end{lemma}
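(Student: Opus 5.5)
I will write $\Delta=e^{i\delta}$ with $\delta\in\R$ and solve the equation $c^{2d}=\Delta$ in polar form. If $c=e^{i\theta}\in\T$, then $c^{2d}=\Delta$ holds if and only if $2d\theta\equiv\delta\pmod{2\pi}$. So the solutions are exactly
\[
c_k=\exp\!\Bigl(i\,\frac{\delta+2\pi k}{2d}\Bigr),\qquad k=0,1,\ldots,2d-1.
\]
These $2d$ points are pairwise distinct, because their arguments lie in the half-open interval $[\delta/(2d),\,\delta/(2d)+2\pi)$. Consecutive arguments differ by $2\pi/(2d)=\pi/d$. Hence the solutions are equally spaced, with adjacent arguments differing by $\pi/d$.

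For the consequence, take an open arc of length $\ell>\pi/d$. Identify it with an open interval $(\alpha,\alpha+\ell)$ of arguments, which makes sense since $\ell\le2\pi$. The set of arguments of solutions is the coset $\delta/(2d)+(\pi/d)\mathbb Z$ of the lattice $(\pi/d)\mathbb Z$. Let $\theta$ be the least element of this coset with $\theta>\alpha$. Then $\theta-\pi/d\le\alpha$, so
\[
\alpha<\theta\le\alpha+\pi/d<\alpha+\ell .
\]
Thus $e^{i\theta}$ is a solution lying in the arc.

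There is no real obstacle here. The only care needed is that the arc be open and that the inequality $\ell>\pi/d$ be strict. An open arc of length exactly $\pi/d$ can fall strictly between two adjacent solutions, so strictness is what guarantees a solution inside. The hypothesis $d\ge2$ plays no role in this argument. It presumably reflects how the lemma is used later, for example with $d=\dim V$ or a related dimension.
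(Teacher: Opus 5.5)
Your proof is correct and follows essentially the same route as the paper: list the $2d$ roots explicitly, note that they are spaced $\pi/d$ apart, then deduce the covering property. The only difference is in the last step. The paper argues by contradiction (a solution-free open arc must lie in the open gap between two adjacent roots, so its length is at most $\pi/d$), whereas you exhibit the solution directly as the least point of the coset above $\alpha$, which is equivalent.
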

	
	\begin{proof}
		If $\Delta=e^{i\theta}$, all solutions are
		\[
		c_k=\exp\!\left(\frac{i(\theta+2\pi k)}{2d}\right),
		\qquad k=0,\ldots,2d-1.
		\]
		The arguments of adjacent solutions differ by $\pi/d$. If an open
		arc as above contained no solution, it would have to lie in the open
		arc between two adjacent solutions and would therefore have length at
		most $\pi/d$, a contradiction.
	\end{proof}
	
	On a Hermitian plane $H$ of signature $(1,1)$, define, for
	$B\in\UU(H)$,
	\[
	\mathcal W_-(B)
	=
	\bigl\{-\ip{v}{Bv}:\ip{v}{v}=-1\bigr\},
	\qquad
	\Phi(B)
	=
	\left\{\frac z{|z|}:z\in\mathcal W_-(B)\right\}.
	\]
	
	\begin{lemma}\label{lem:negative-activity}
		Let $B\in\UU(1,1)$ and choose
		$z=-\ip{v}{Bv}\in\mathcal W_-(B)$. Set
		\[
		c=\frac z{|z|}.
		\]
		Then there exists a unitary involution $K_H$ of determinant $-1$
		such that $BK_H$ is semisimple and has a negative-type eigenline with
		corresponding eigenvalue $c$. Its other, positive-type eigenline has
		corresponding eigenvalue
		\begin{equation}\label{eq:activity-positive-eigenvalue}
			p=-\frac{\det B}{c}.
		\end{equation}
	\end{lemma}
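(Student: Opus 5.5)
The plan is to build $K_H$ as the reflection that exchanges the given negative vector $v$ with a suitably rescaled preimage $cB^{-1}v$. Then $BK_H$ sends $v$ to $c v$. The exchanging reflection comes from Lemma~\ref{lem:equal-norm-exchange}.

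\emph{Setting up the pair.} Put $w:=cB^{-1}v$. Since $B^{-1}$ is unitary and $|c|=1$, we have $\ip{w}{w}=\ip{v}{v}=-1$. Moreover,
\[
\ip{v}{w}=c\,\ip{v}{B^{-1}v}=c\,\ip{Bv}{v}=c\,\overline{\ip{v}{Bv}}=-c\,\overline z=-|z|\in\R .
\]
So the hypotheses of Lemma~\ref{lem:equal-norm-exchange} hold. Note that $z\neq0$, which is part of the claim. This follows from the reverse Cauchy--Schwarz inequality for two negative vectors in signature $(1,1)$: $|\ip{v}{w}|^2\ge\ip{v}{v}\ip{w}{w}=1$, with equality iff $w\in\C v$. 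Hence $|z|\ge1$.

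\emph{Constructing $K_H$.} Put $q=v-w$. Then $\ip{q}{q}=2(-1-\ip{w}{v})=2(|z|-1)$.

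If $|z|>1$, then $q$ is positive. Take $K_H=J_q=\Id-2P_{\C q}$, a reflection in a positive line, so $\det K_H=-1$. By Lemma~\ref{lem:equal-norm-exchange}, $K_Hv=w$. Therefore
\[
BK_Hv=Bw=cv .
\]

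If $|z|=1$, the equality case gives $w=\mu v$, and $\ip{v}{w}=-\mu=-1$ forces $w=v$, i.e.\ $Bv=cv$. In this case take $K_H$ to be the reflection in the positive line $v^\perp$. It fixes $v$, has determinant $-1$, and again $BK_Hv=cv$.

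\emph{Spectral conclusion.} In both cases $\C v$ is a negative eigenline of the unitary map $BK_H$ with eigenvalue $c$. Unitarity then makes the orthogonal complement $v^\perp$ invariant; it is a positive line, so $BK_H$ acts on it by some scalar $p$. The orthogonal splitting $H=\C v\perp v^\perp$ into eigenlines shows that $BK_H$ is semisimple. Finally,
\[
cp=\det(BK_H)=-\det B
\]
gives~\eqref{eq:activity-positive-eigenvalue}.

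The only delicate point is ensuring $q$ is positive. It rests on the reverse Cauchy--Schwarz inequality, and on the separate treatment of the degenerate case $|z|=1$, where $v$ is already an eigenvector of $B$.
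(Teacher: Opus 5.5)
Your proof is correct and follows the paper's argument: the same vector $w=cB^{-1}v$, the same computation $\ip{v}{w}=-|z|$, Lemma~\ref{lem:equal-norm-exchange} when $|z|>1$, the reflection in $v^\perp$ when $|z|=1$, and the determinant count for $p$. The only difference is that you get $|z|\ge1$ and the equality case from reverse Cauchy--Schwarz, while the paper writes $Bv=zv+u$ with $u\perp v$ and reads off $\ip{u}{u}=|z|^2-1$; this is the same fact, but apply the inequality to $v$ and $Bv$ rather than to $w$, since defining $w$ already presupposes $z\ne0$.
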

	
	\begin{proof}
		With respect to the orthogonal decomposition
		$H=\C v\perp v^\perp$, write
		\[
		Bv=zv+u,\qquad u\perp v.
		\]
		Since $\ip{Bv}{Bv}=\ip{v}{v}=-1$,
		\[
		\ip{u}{u}=|z|^2-1,
		\]
		and therefore $|z|\ge1$. Put
		\[
		w=cB^{-1}v.
		\]
		Then $\ip{w}{w}=-1$, while
		\[
		\ip{v}{w}
		=c\,\ip{v}{B^{-1}v}
		=c\,\ip{Bv}{v}
		=-c\overline z=-|z|\in\R.
		\]
		If $|z|>1$, then
		\[
		\ip{v-w}{v-w}=2(|z|-1)>0,
		\]
		so Lemma~\ref{lem:equal-norm-exchange} gives a reflection $K_H$ in
		a positive line such that $K_Hv=w$. If $|z|=1$, the decomposition
		above gives $u=0$ and hence $w=v$; in this case take the reflection in
		the positive line $v^\perp$. In both cases, $\det K_H=-1$, and
		\[
		BK_Hv=Bw=cv.
		\]
		Since $BK_H$ is unitary and preserves the negative line $\C v$, it
		also preserves its orthogonal complement $v^\perp$. The latter is a
		positive line, so $BK_H$ is semisimple. If $p$ denotes its
		positive-type eigenvalue, comparison of determinants gives
		$cp=\det(BK_H)=-\det B$, which is precisely
		Equation~\eqref{eq:activity-positive-eigenvalue}.
	\end{proof}
	
	We can now give the necessary and sufficient spectral pairing criterion on
	the positive-definite orthogonal complement. It characterizes exactly when,
	after choosing signs of eigenvalue square roots and multiplying by a unitary
	involution, the resulting spectrum can be made invariant under a prescribed
	inversion. Condition~\eqref{eq:positive-completion-determinant} is the sole
	condition for such a spectral pairing. We allow $W=\{0\}$; in this case the
	determinant of its identity map is understood to be $1$, and its spectrum is
	the empty multiset.
	
	\begin{proposition}\label{prop:sqrt-positive-completion}
		Let $W>0$, let $\dim_{\C}W=m$, let $D\in\UU(W)$, and let
		$p,c\in\T$. Define
		\[
		\rho_c(\zeta)=\frac{c^2}{\zeta}.
		\]
		Then the following conditions are equivalent:
		\begin{enumerate}[label=\textup{(\roman*)}]
			\item
			\begin{equation}\label{eq:positive-completion-determinant}
				p^2\det D=c^{2(m+1)};
			\end{equation}
			\item there exist $F\in\UU(W)$ and a unitary involution
			$K_W\in\UU(W)$ such that
			\[
			F^2=D,
			\qquad
			\{p\}\sqcup\spec(FK_W)
			\quad\text{is invariant under }\rho_c.
			\]
		\end{enumerate}
	\end{proposition}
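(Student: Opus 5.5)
The plan is to prove (ii)$\Rightarrow$(i) by a multiplicativity count, and (i)$\Rightarrow$(ii) by an explicit construction in an orthonormal eigenbasis of $D$.

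\emph{(ii)$\Rightarrow$(i).} Let $S=\{p\}\sqcup\spec(FK_W)$, a multiset of $m+1$ elements of $\T$ invariant under $\rho_c$. Since $\rho_c$ is an involution on $\T$, $S$ splits into two kinds of pieces:
\begin{itemize}
\item two-element orbits $\{\zeta,c^2/\zeta\}$, each with product $c^2$;
\item fixed points $\zeta=\pm c$, each with square $c^2$.
\end{itemize}
Hence $\bigl(\prod S\bigr)^2=c^{2(m+1)}$. On the other hand,
\[
\prod S=p\det F\det K_W,\qquad (\det F)^2=\det D,\qquad (\det K_W)^2=1,
\]
so $\bigl(\prod S\bigr)^2=p^2\det D$, which is \eqref{eq:positive-completion-determinant}. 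When $m=0$ this reads $p^2=c^2$, which is consistent.

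\emph{(i)$\Rightarrow$(ii).} Choose an orthonormal eigenbasis $e_1,\dots,e_m$ of $D$ with eigenvalues $\delta_j$. Set $F=\diag(f_j)$ with $f_j^2=\delta_j$, keeping the signs of the $f_j$ free for later. If $m=0$, (i) says $p=\pm c$ and the claim is trivial. For $m\ge1$, aim for the target spectrum
\[
\spec(FK_W)=\{c^2/p\}\sqcup\{\pm c,\dots,\pm c\}\quad(m-1\text{ further entries}).
\]
Together with $p$ this multiset is $\rho_c$-invariant. Its determinant is forced: $\det(FK_W)=\pm c^{2}p^{-1}c^{m-1}$, and by (i) this equals $\pm\det F$. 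The sign freedom in the $f_j$ and in the extra $\pm c$ entries absorbs the remaining $\pm1$. So the determinant is exactly the one obstruction, and the task becomes realizing a prescribed spectrum with the correct determinant.

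\emph{Basic tool: a two-dimensional block.} Let $W_2$ be positive definite of dimension two with $F=\diag(f_1,f_2)$, and fix a target $\zeta\in\T$. I want a reflection $K$ such that $FK$ has eigenvalue $\zeta$, which requires $Kv=\zeta F^{-1}v$ for some unit vector $v$. By Lemma~\ref{lem:equal-norm-exchange}, such a $K$ exists as soon as $\zeta\ip{v}{F^{-1}v}\in\R$; positivity of $q=v-w$ is automatic because $W$ is definite. As $v$ varies, $\ip{v}{F^{-1}v}$ sweeps the chord $[\overline f_1,\overline f_2]$. Replacing $f_2$ by $-f_2$ gives the chord $[\overline f_1,-\overline f_2]$. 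Every line through $0$ meets the boundary of the triangle $\overline f_2,\overline f_1,-\overline f_2$ at a point on one of these two chords, so every $\zeta$ is attainable. The other eigenvalue of $FK$ on the block is then $\pm f_1f_2/\zeta$, fixed by the determinant.

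\emph{Assembling the blocks.} The plan is to pair up the basis directions and use the two-dimensional tool on each pair. One block is assigned the target $c^2/p$; every other block is assigned the target $c$. The leftover eigenvalue of each block is forced by its determinant. The remaining task is to choose the targets so that these leftovers are themselves $\pm c$ or form $\rho_c$-pairs among themselves. I expect this bookkeeping of forced leftover eigenvalues across blocks to be the main obstacle. Independent two-dimensional blocks produce leftovers $\pm f_1f_2/\zeta$ that need not pair up, and composing reflections on overlapping blocks would not give a single involution $K_W$.

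What I would try first is to take $K_W$ to be a single reflection $\Id-2P_{\C q}$ in one vector $q$ that is generic with respect to all the $e_j$. The eigenvalues of $F(\Id-2P_{\C q})$ are governed by a secular equation in the weights $|q_j|^2$ and (after the sign flips) interlace the $\pm f_j$ on $\T$. The aim is to show that, after choosing signs, any interlacing configuration with the correct determinant is attained. Then $m-1$ eigenvalues can be prescribed as $c^2/p$ and $\pm c$, and the last one is correct by (i). If interlacing turns out to be too restrictive, the fallback is to enlarge $K_W$ to an involution whose $(-1)$-eigenspace has higher rank, which gives more freedom.
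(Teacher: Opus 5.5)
Your (ii)$\Rightarrow$(i) argument is correct and matches the paper's. Your two-dimensional tool is also exactly the paper's key step: flip the sign of $f_2$ so that the chord $[\overline f_1,\pm\overline f_2]$ meets the line $\R\,\overline\zeta$, then apply Lemma~\ref{lem:equal-norm-exchange}.

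The gap is in (i)$\Rightarrow$(ii). You never assemble the blocks. Worse, the target you commit to, $\spec(FK_W)=\{c^2/p\}\sqcup\{\pm c,\dots,\pm c\}$, is not attainable in general, so the determinant is not the only obstruction to it.

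Here is a counterexample. Take $m=3$, $p=c$, and $D$ with $\det D=c^6$ but $c^2\notin\spec D$; then (i) holds. Your target $\{c,\pm c,\pm c\}$ has three entries in $\{c,-c\}$. So some $z\in\{c,-c\}$ must occur in $\spec(FK_W)$ with multiplicity at least $2$, and since $FK_W$ is diagonalizable this gives a two-dimensional eigenspace.

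Every unitary involution of a three-dimensional definite space is $\pm\Id$ or $\pm(\Id-2P_{\C q})$. Hence $FK_W$ agrees with $\pm F$ on a subspace of codimension at most one. The two-dimensional $z$-eigenspace therefore contains some $x\ne0$ with $Fx=\pm zx$, so $Dx=z^2x=c^2x$, contradicting $c^2\notin\spec D$. No choice of $F$ and $K_W$ reaches your target, including your higher-rank fallback.

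The same count rules out your single-reflection plan for every $m\ge4$ whenever $c^2\notin\spec D$. In that case the target has $m-1\ge3$ entries in $\{c,-c\}$, again forcing a repeated eigenvalue outside $\spec(\pm F)$.

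The missing idea is not to fix the target at all, but to feed the forced leftover eigenvalue back into the recursion. This is exactly why the proposition is stated for an arbitrary $p$. Apply your two-dimensional tool to $\diag(a^2,b^2)$ with target $x=c^2/p$. The leftover eigenvalue $q=-\varepsilon ab/x$ satisfies
\[
q^2\det D'=\frac{\det D}{x^2}=\frac{p^2\det D}{c^4}=c^{2(m-1)},
\]
which is hypothesis (i) on the remaining $(m-2)$-dimensional space with $q$ in place of $p$.

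Now induct on $m$ in steps of two, with these base cases:
\begin{itemize}
\item $m=0$: (i) says $p=\pm c$, which is fixed by $\rho_c$.
\item $m=1$: take $F=a$ and $K_W=\sigma\Id$, with $\sigma\in\{\pm1\}$ chosen so that $\sigma pa=c^2$.
\end{itemize}
This produces a block-diagonal $K_W$ whose spectrum pairs along the chain $p\leftrightarrow x$, $q\leftrightarrow c^2/q$, \dots. That is the paper's proof, and it needs neither interlacing nor higher-rank involutions.
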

	
	\begin{proof}
		We first prove necessity. Put
		\[
		\Pi:=p\det(FK_W),
		\]
		the product of all $m+1$ elements in the spectral multiset
		\[
		\{p\}\sqcup\spec(FK_W).
		\]
		If this multiset is invariant under $\rho_c$, then
		\[
		\Pi
		=\prod_{\zeta}\rho_c(\zeta)
		=\frac{c^{2(m+1)}}{\Pi}.
		\]
		On the other hand, $F^2=D$ and $K_W^2=\Id$ imply
		\[
		\Pi^2
		=p^2\det(FK_W)^2
		=p^2\det D.
		\]
		Thus $p^2\det D=c^{2(m+1)}$, which is
		Equation~\eqref{eq:positive-completion-determinant}.
		
		We prove sufficiency by induction on $m$. If $m=0$, the condition is
		$p^2=c^2$, so $\rho_c(p)=p$. If $m=1$, write $D=a^2$ with
		$a\in\T$. The condition allows us to choose
		$\sigma\in\{\pm1\}$ such that
		\[
		pa\sigma=c^2.
		\]
		Set $F=a$ and $K_W=\sigma\Id$. Then $FK_W=c^2/p$, which is paired
		with $p$.
		
		Now suppose that $m\ge2$. In an orthonormal eigenbasis, write
		\[
		D=\diag(a^2,b^2)\orth D',
		\qquad a,b\in\T,
		\]
		and put $x=c^2/p$. Choose $\varepsilon\in\{\pm1\}$ such that
		\[
		\operatorname{Im}\!\left(\frac xa\right)
		\operatorname{Im}\!\left(\frac{x}{\varepsilon b}\right)\le0.
		\]
		Such a choice is always possible because replacing $b$ by $-b$
		replaces $x/b$ by its negative. In other words, $x/a$ and
		$x/(\varepsilon b)$ lie in opposite closed half-planes bounded by the
		real axis, where points on the real axis are allowed. Some convex
		combination of their imaginary parts therefore vanishes. Thus there
		exists $t\in[0,1]$ such that
		\[
		t\frac xa+(1-t)\frac{x}{\varepsilon b}\in\R.
		\]
		On this two-dimensional block, set
		\[
		F_0=\diag(a,\varepsilon b),\qquad
		v=(\sqrt t,\sqrt{1-t})^T,\qquad
		w=xF_0^{-1}v.
		\]
		Both $v$ and $w$ are unit vectors, and
		\[
		\ip{v}{w}
		=t\frac xa+(1-t)\frac{x}{\varepsilon b}\in\R.
		\]
		By Lemma~\ref{lem:equal-norm-exchange}, there is a
		two-dimensional unitary involution $K_0$ of determinant $-1$ such that
		$K_0v=w$; if $v=w$, take the reflection in $v^\perp$. Hence
		\[
		F_0K_0v=xv.
		\]
		The other eigenvalue of $F_0K_0$ is uniquely determined by its
		determinant and is
		\[
		q=-\frac{\varepsilon ab}{x}.
		\]
		On the remaining $(m-2)$-dimensional space,
		\[
		q^2\det D'
		=\frac{\det D}{x^2}
		=\frac{p^2\det D}{c^4}
		=c^{2(m-1)}.
		\]
		This is exactly the induction hypothesis for the remaining
		$(m-2)$-dimensional space, with $q$ as the prescribed spectral value.
		Thus there exist $F'$ and $K'$ such that
		\[
		(F')^2=D',
		\qquad
		\{q\}\sqcup\spec(F'K')
		\quad\text{is invariant under }\rho_c.
		\]
		Finally, take
		\[
		F=F_0\orth F',\qquad K_W=K_0\orth K'.
		\]
		Now $p$ is paired with $x=\rho_c(p)$, and the remaining spectrum is
		paired under $\rho_c$ by the induction hypothesis. This
		proves~\textup{(ii)}.
	\end{proof}
	
	We conclude this section with the strong-reversibility criterion
	corresponding to the preceding spectral pairing in the loxodromic case.
	
	\begin{lemma}\label{lem:lox-strong-reversible}
		Let $A\in\UU(n,1)$ be semisimple and admit an $A$-invariant
		orthogonal decomposition
		\[
		V=H\orth P,
		\qquad
		\operatorname{sign}(H)=(1,1),\quad P>0.
		\]
		Suppose that the eigenvalues of $A|_H$ on its two null eigenlines are
		\[
		cr,\qquad cr^{-1},
		\qquad c\in\T,\quad r>1,
		\]
		and that $\spec(A|_P)$ is invariant under
		\[
		\rho_c(\zeta)=\frac{c^2}{\zeta}.
		\]
		Then $[A]$ is a product of at most two holomorphic involutions.
	\end{lemma}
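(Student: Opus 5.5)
Following the proof of Lemma~\ref{lem:spectral-strong}, I normalize by the phase $c$ and build an involution reversing $U=c^{-1}A$ block by block. Since $U$ is semisimple with $V=H\orth P$, we have $U|_H$ with null eigenlines $\C e_+$ and $\C e_-$ and eigenvalues $r$ and $r^{-1}$. Because $\spec(A|_P)$ is $\rho_c$-invariant, $\spec(U|_P)$ is invariant under $w\mapsto w^{-1}$. I then look for a unitary involution $I_1=I_H\orth I_P$ with $I_1UI_1=U^{-1}$. Once it is found, $I_2=I_1U$ is an involution and $U=I_1I_2$, exactly as in Lemma~\ref{lem:two-strong}. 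Projectivizing and deleting trivial factors then gives $[A]=[U]\in\mathcal I^{\le2}$.

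The positive block is handled verbatim as in Lemma~\ref{lem:spectral-strong}. The eigenspaces of $U|_P$ are mutually orthogonal and positive definite (by \eqref{eq:eigen-orthogonality}, since all eigenvalues are unimodular). For each pair $w\ne w^{-1}$ the two eigenspaces have equal dimension, and I swap them by a unitary isometry $\phi$ together with $\phi^{-1}$. On the eigenspaces for $\pm1$ I take the identity. This gives $I_P$ with $I_P^2=\Id$ and $I_PU|_PI_P=U|_P^{-1}$.

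On the indefinite block the two eigenlines are null and not orthogonal (otherwise $H$ would be degenerate), so I rescale to get $\ip{e_+}{e_-}=1$ (any nonzero value is fine after scaling $e_-$). I then define $I_H$ by $e_+\mapsto e_-$ and $e_-\mapsto e_+$. It preserves the form: the null vectors stay null, and $\ip{I_He_+}{I_He_-}=\ip{e_-}{e_+}=\overline{1}=1$. It is an involution, and it carries the $r$-eigenline to the $r^{-1}$-eigenline, so $I_HU|_HI_H=U|_H^{-1}$, because $U|_H^{-1}$ has eigenvalue $r^{-1}$ on $e_+$ and $r$ on $e_-$.

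The only point needing care is the normalization $\ip{e_+}{e_-}\in\R$, and it is trivial: scaling $e_-$ by a complex number achieves it. After that the gluing is Lemma~\ref{lem:orthogonal-gluing}. The projective step needs no extra work: $I_1$ and $I_2$ are linear involutions, so each is $\pm$ a standard involution by Lemma~\ref{lem:normalise-involution}, and any factor equal to $\pm\Id$ is dropped.
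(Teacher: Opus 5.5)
Your proposal is correct and is essentially the paper's proof. You normalize to $U=c^{-1}A$, exchange the two null eigenlines (scaled so that $\ip{e_+}{e_-}=1$) on $H$, pair the $w$- and $w^{-1}$-eigenspaces on $P$, glue, and take $I_1U$ as the second factor.

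The only cosmetic difference is the sign on $H$. The paper uses $e_\pm\mapsto -e_\mp$, so $J_H$ is already standard, since its $(-1)$-eigenline $\C(e_++e_-)$ is positive. Your plain swap has the negative $(-1)$-eigenline $\C(e_+-e_-)$, so it relies on the global sign normalization of Lemma~\ref{lem:normalise-involution}, which you correctly invoke.
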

	
	\begin{proof}
		Set
		\[
		U=c^{-1}A.
		\]
		Since $[U]=[A]$, it is enough to prove that $[U]$ is a product of at
		most two holomorphic involutions. The two eigenvalues of $U|_H$ are
		$r$ and $r^{-1}$.
		
		We first construct a reversing involution on $H$. Choose vectors
		$e_+$ and $e_-$ on the two null eigenlines of $U|_H$, normalized so
		that
		\[
		\ip{e_+}{e_+}=\ip{e_-}{e_-}=0,
		\qquad
		\ip{e_+}{e_-}=1,
		\]
		and
		\[
		Ue_+=re_+,\qquad Ue_-=r^{-1}e_-.
		\]
		Define
		\[
		J_He_+=-e_-,
		\qquad
		J_He_-=-e_+.
		\]
		Clearly $J_H^2=\Id$. With respect to the basis $(e_+,e_-)$, the
		matrix of $J_H$ and the Gram matrix of the Hermitian form are,
		respectively,
		\[
		\begin{pmatrix}
			0&-1\\
			-1&0
		\end{pmatrix},
		\qquad
		\begin{pmatrix}
			0&1\\
			1&0
		\end{pmatrix}.
		\]
		Thus $J_H$ preserves the Hermitian form and is a unitary involution.
		Moreover, $J_HU|_HJ_H=(U|_H)^{-1}$. Its $(-1)$-eigenline is
		$\C(e_++e_-)$, and
		$\ip{e_++e_-}{e_++e_-}=2>0$, so $J_H$ is a standard involution.
		
		We now treat the positive-definite space $P$. If $\zeta$ is an
		eigenvalue of $A|_P$, then $w=\zeta/c$ is an eigenvalue of $U|_P$,
		and
		\[
		\frac{\rho_c(\zeta)}{c}
		=\frac{c}{\zeta}
		=w^{-1}.
		\]
		Hence $\spec(U|_P)$ is invariant under $w\mapsto w^{-1}$. Since
		$P>0$ and $U|_P$ is unitary, it is unitarily diagonalizable. Thus,
		for every $w\ne\pm1$,
		\[
		E_w=\ker(U-w\Id),
		\qquad
		E_{w^{-1}}=\ker(U-w^{-1}\Id)
		\]
		have the same dimension and are orthogonal.
		
		For every pair $E_w,E_{w^{-1}}$, choose a unitary isometry
		\[
		\phi_w:E_w\longrightarrow E_{w^{-1}},
		\]
		and define, on $E_w\orth E_{w^{-1}}$,
		\[
		J_P(x+y)
		=\phi_w(x)+\phi_w^{-1}(y),
		\qquad
		x\in E_w,\ y\in E_{w^{-1}}.
		\]
		Set $J_P=\Id$ on the eigenspaces for the eigenvalues $\pm1$.
		Taking the orthogonal direct sum of these maps gives a unitary
		involution $J_P$ satisfying
		$J_PU|_PJ_P=(U|_P)^{-1}$.
		
		Now set
		\[
		J=J_H\orth J_P.
		\]
		Then $J$ is a linear unitary involution and $JUJ=U^{-1}$. If
		\[
		K=JU,
		\]
		then
		$K^2=JUJU=U^{-1}U=\Id$, so $K$ is also a linear unitary involution,
		and $U=JK$.
		
		Since $J^2=K^2=\Id$, Lemma~\ref{lem:normalise-involution} shows that
		each of $[J]$ and $[K]$, when nontrivial, can be represented by a
		standard linear involution; if either projective class is the identity,
		it is simply omitted. Thus $[A]=[U]=[J][K]$ is a product of at most
		two holomorphic involutions.
	\end{proof}
	
	\section{Square-Root Constructions for the Different Isometry Types}
	\label{sec:sqrt-types}
	
	\subsection{Square-root constructions on two-dimensional active blocks}
	\label{sec:upper}
	
	Elliptic, loxodromic, and two-step unipotent isometries share the same
	construction: one first produces a negative-type eigenline on the
	two-dimensional active block and then completes the square-root decomposition
	using spectral pairing on the positive-definite orthogonal complement.
	
	\begin{lemma}\label{lem:active-root-closure}
		Let $g=[G]\in\PU(n,1)$, and suppose that there is an orthogonal
		decomposition
		\[
		V=H\orth W,
		\qquad
		\operatorname{sign}(H)=(1,1),\quad W>0,\quad
		d=\dim_{\C}V,
		\]
		where
		\[
		G=B^2\orth D,
		\qquad B\in\UU(H),\quad D\in\UU(W).
		\]
		If there exists $c\in\Phi(B)$ satisfying
		\begin{equation}\label{eq:global-phase-equation}
			c^{2d}=\det G,
		\end{equation}
		then $g$ has a square root $h$ such that $\linv(h)\le3$.
	\end{lemma}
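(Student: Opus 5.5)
The plan is to build the square root blockwise as $R=B\orth F$, with $F^2=D$ still to be chosen, and then to produce an involution $K=K_H\orth K_W$ such that $[RK]$ is a product of at most two holomorphic involutions. Lemma~\ref{lem:sqrt-three-reduction} then gives $h=[R]$ with $h^2=[R^2]=[G]=g$ and $\linv(h)\le3$.

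On the indefinite block, take $c\in\Phi(B)$ as in the hypothesis and apply Lemma~\ref{lem:negative-activity}. This gives a unitary involution $K_H$ of determinant $-1$ such that $BK_H$ is semisimple, has a negative-type eigenline $L\subset H$ with eigenvalue $c$, and has a positive-type eigenline $L^\perp\cap H$ with eigenvalue $p=-\det B/c$.

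On $W$, apply Proposition~\ref{prop:sqrt-positive-completion} with $m=\dim W=d-2$, using the prescribed values $p$ and $c$. Its condition \eqref{eq:positive-completion-determinant} requires $p^2\det D=c^{2(m+1)}=c^{2(d-1)}$. Since $p^2=(\det B)^2/c^2$ and $\det G=(\det B)^2\det D$, this is equivalent to
\[
\frac{\det G}{c^2}=c^{2d-2},
\]
that is, to $c^{2d}=\det G$, which is exactly \eqref{eq:global-phase-equation}. The proposition therefore provides $F\in\UU(W)$ and a unitary involution $K_W$ such that $F^2=D$ and $\{p\}\sqcup\spec(FK_W)$ is invariant under $\rho_c$.

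Now put $A=RK=BK_H\orth FK_W$. Then $A$ is semisimple: $BK_H$ is semisimple, and $FK_W$ is unitary on a positive-definite space, so Lemma~\ref{lem:orthogonal-gluing} applies. Relative to the decomposition $V=L\perp P$ with $P=(L^\perp\cap H)\orth W>0$, we have $A|_L=c\Id$. The spectrum of $A|_P$ is $\{p\}\sqcup\spec(FK_W)$, which is $\rho_c$-invariant. Lemma~\ref{lem:spectral-strong} then shows that $[A]=[RK]$ is a product of at most two holomorphic involutions, and Lemma~\ref{lem:sqrt-three-reduction} concludes the argument.

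The only real work is the determinant bookkeeping showing that the global phase equation \eqref{eq:global-phase-equation} coincides with the local completion condition \eqref{eq:positive-completion-determinant}. That identity is the step I expect to need the most care. The remaining step is a routine check that $K=K_H\orth K_W$ is a unitary involution and that $R^2=B^2\orth D=G$, so that $[R]^2=g$.
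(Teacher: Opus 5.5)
Your proposal is correct and matches the paper's proof step for step. You use Lemma~\ref{lem:negative-activity} on $H$, then the determinant identity $p^2\det D=\det G/c^2=c^{2(m+1)}$ to invoke Proposition~\ref{prop:sqrt-positive-completion}, then Lemma~\ref{lem:spectral-strong} on $RK=(BK_H)\orth(FK_W)$, and finally Lemma~\ref{lem:sqrt-three-reduction}; your explicit semisimplicity check and the choice $P=(L^\perp\cap H)\orth W$ simply spell out details the paper leaves implicit.
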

	
	\begin{proof}
		By Lemma~\ref{lem:negative-activity}, there exists a unitary
		involution $K_H$ such that $BK_H$ is semisimple, its negative-type
		eigenline has corresponding eigenvalue $c$, and its positive-type
		eigenline has corresponding eigenvalue
		\[
		p=-\frac{\det B}{c}.
		\]
		Let $m=\dim_{\C}W=d-2$. Since
		$\det G=(\det B)^2\det D$,
		Equation~\eqref{eq:global-phase-equation} gives
		\[
		p^2\det D
		=\frac{(\det B)^2\det D}{c^2}
		=c^{2(d-1)}
		=c^{2(m+1)}.
		\]
		Proposition~\ref{prop:sqrt-positive-completion} therefore gives
		$F^2=D$ and $K_W^2=\Id$ such that
		\[
		\{p\}\sqcup\spec(FK_W)
		\]
		is invariant under $\rho_c$.
		
		Set
		\[
		R=B\orth F,\qquad K=K_H\orth K_W.
		\]
		Then $R^2=G$ and $K^2=\Id$. The operator
		\[
		RK=(BK_H)\orth(FK_W)
		\]
		is semisimple; its negative-type eigenline has corresponding
		eigenvalue $c$, and the spectrum on its positive-definite part is
		invariant under $\rho_c$. Lemma~\ref{lem:spectral-strong} shows that
		$[RK]$ is a product of at most two holomorphic involutions. The result
		now follows from Lemma~\ref{lem:sqrt-three-reduction}.
	\end{proof}
	
	\begin{proposition}\label{prop:sqrt-two-dimensional}
		Let $n\ge2$, and let $g\in\PU(n,1)$ be elliptic, loxodromic, or
		two-step unipotent. Then $g$ has a square root $h$ satisfying
		$\linv(h)\le3$.
	\end{proposition}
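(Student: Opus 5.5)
The plan is to reduce everything to Lemma~\ref{lem:active-root-closure}. By Lemma~\ref{lem:active-blocks}, in cases (a)--(c) we have, for a suitable lift $G$, an orthogonal decomposition $V=H\orth W$ with $\operatorname{sign}(H)=(1,1)$, $W>0$ and $G=G_H\orth D$. In the elliptic case we take $H$ to be the span of a negative eigenvector $e$ and any positive eigenvector $f$ of $G$; such an $f$ exists since $n\ge2$, and then $W=H^\perp$. We will choose a square root $B\in\UU(H)$ of $G_H$. The aim is to show that $\Phi(B)$, possibly after replacing $B$ by $-B$ or by another square root, contains an open arc of length strictly greater than $\pi/d$, where $d=n+1\ge3$. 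Lemma~\ref{lem:phase-grid} then produces $c\in\Phi(B)$ with $c^{2d}=\det G$. Since different square roots of $G_H$ all have square $G_H$, we may use the union of $\Phi(B)$ over the finitely many diagonal sign choices. It therefore suffices to show that this union contains an open arc of length $>\pi/3$.

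\emph{Elliptic case.} Write $G_H=\diag(\alpha,\beta)$ in the orthonormal basis $(e,f)$ with $\ip ee=-1$, and put $B=\diag(a,\pm b)$ with $a^2=\alpha$ and $b^2=\beta$. A negative unit vector has the form $v=\cosh s\,e+e^{i\theta}\sinh s\,f$, and a direct computation gives
\[
-\ip{v}{Bv}=a\cosh^2 s\mp b\sinh^2 s=a+(a\mp b)\sinh^2 s .
\]
As $s$ ranges over $\R$, this traces the ray from $a$ in direction $a\mp b$. Its phases therefore sweep the half-open arc from $\arg a$ up to, but not including, $\arg(a\mp b)$. Since $b/a\in\T$, the vectors $a-b$ and $a+b$ are orthogonal, and $a$ lies in the angle between them. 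Taking both sign choices, we obtain the open arc between $\arg(a-b)$ and $\arg(a+b)$, which has length $\pi/2>\pi/3$.

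\emph{Loxodromic case.} Use a null basis $e_\pm$ of $H$ with $\ip{e_+}{e_-}=1$ and $G_He_\pm=\lambda r^{\pm1}e_\pm$, where $r>1$. Put $B=a\,\diag(\sqrt r,1/\sqrt r)$ with $a^2=\lambda$. For $v=xe_++ye_-$ with $2\operatorname{Re}(\bar xy)=-1$, set $w=\bar xy=-\tfrac12+i\sigma$. Then
\[
-\ip{v}{Bv}/a=-\bigl(\sqrt r\,w+\bar w/\sqrt r\bigr).
\]
Its real part is the positive constant $\tfrac12(\sqrt r+1/\sqrt r)$, and its imaginary part $\sigma(1/\sqrt r-\sqrt r)$ ranges over all of $\R$. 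So $\Phi(B)$ is an open half-circle, of length $\pi$.

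\emph{Two-step unipotent case.} Take $B=\mu\begin{pmatrix}1&it/2\\0&1\end{pmatrix}$ with $\mu^2=\lambda$. Using the same type of null-basis computation, $-\ip{v}{Bv}=\mu(1+i\tau)$, where $\tau$ ranges over an open half-line of $\R$ whose sign is that of $t$. This gives an arc of length $\pi/2$ starting at $\arg\mu$, again enough.

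The main obstacle is carrying out these $\mathcal W_-$ computations correctly in each normalization: getting the signs of the inner products right, and confirming that the arcs obtained are genuinely open, of length $>\pi/d$, for the relevant lift. Once that is done, we apply Lemma~\ref{lem:phase-grid} and then Lemma~\ref{lem:active-root-closure}, which finishes the proof. Note also that rescaling the lift of $G$ changes both sides of \eqref{eq:global-phase-equation} consistently, so the choice of lift causes no difficulty.
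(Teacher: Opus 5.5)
Your overall route is the paper's: reduce to Lemma~\ref{lem:active-root-closure}, compute $\Phi(B)$ on the $(1,1)$-block, obtain open arcs of lengths $\pi/2$, $\pi$ and $\pi/2$ in the three cases, and apply Lemma~\ref{lem:phase-grid} using $\pi/d\le\pi/3$. The loxodromic and two-step unipotent computations are sound up to harmless sign slips. With $\ip{e_+}{e_-}=1$ one actually gets $\ip{v}{Bv}=a(w/\sqrt r+\sqrt r\,\bar w)$. In the two-step case, with Gram matrix $\left(\begin{smallmatrix}0&1\\1&0\end{smallmatrix}\right)$, one gets $-\ip{v}{Bv}=\mu(1-i\tfrac t2 s)$ with $s=|v_2|^2>0$, so $\tau$ has the sign of $-t$. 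Neither slip changes the arc lengths.

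There is, however, a step in the elliptic case that fails as written: you allow $f$ to be \emph{any} positive eigenvector. If its eigenvalue $\beta$ equals the negative eigenvalue $\alpha$, then $b=\pm a$. One of $a\mp b$ is then $0$ and the other is $2a$, so both rays have constant phase. Hence the union of $\Phi(B)$ over the two choices $B=\diag(a,\pm b)$ is the single point $\{a\}$, not an arc of length $\pi/2$. Your claim that $a-b$ and $a+b$ are orthogonal with $a$ between them silently assumes both are nonzero.

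In this degenerate situation Lemma~\ref{lem:phase-grid} cannot be applied. The only candidate $c=a$ satisfies $c^{2d}=\det G$ only when $\alpha^d=\det G$. For example, $G=\diag(1,1,e^{i\theta})$ with $f$ chosen in the $1$-eigenspace fails for every $\theta\ne0$.

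The repair is exactly what the paper does. Dispose of $g=1$ directly by taking $h=1$. For $g\ne1$ elliptic, the lift is not scalar, so some positive-type eigenvalue satisfies $\beta\ne\alpha$; choose $f$ in that eigenspace. Then $b\ne\pm a$ for both sign choices, and your arc argument goes through unchanged.
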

	
	\begin{proof}
		\medskip\noindent\textit{(i) The elliptic case.}
		The assertion is immediate for the identity, so suppose that $g\ne1$.
		After multiplying a lift by a central scalar, we may arrange that its
		eigenvalue on the negative-type eigenline is $1$. Choose a
		positive-type eigenline with eigenvalue $\delta\ne1$, and write
		\[
		\delta=e^{2i\beta},\qquad 0<\beta<\pi.
		\]
		On the two-dimensional indefinite subspace spanned by the chosen
		negative- and positive-type eigenlines, take the corresponding
		orthonormal eigenbasis. Then
		\[
		G=\diag(1,e^{2i\beta})\orth D.
		\]
		Consider the two square roots
		\[
		B_+=\diag(1,e^{i\beta}),\qquad
		B_-=\diag(1,-e^{i\beta}).
		\]
		If $v=(\sqrt{1+s},\sqrt s)^T$, then $\ip{v}{v}=-1$, and direct
		calculation gives
		\[
		\begin{aligned}
			\mathcal W_-(B_+)
			&=\{1+(1-e^{i\beta})s:s\ge0\},\\
			\mathcal W_-(B_-)
			&=\{1+(1+e^{i\beta})s:s\ge0\}.
		\end{aligned}
		\]
		Consequently,
		\[
		\begin{aligned}
			\Phi(B_+)
			&=\{e^{i\gamma}:(\beta-\pi)/2<\gamma\le0\},\\
			\Phi(B_-)
			&=\{e^{i\gamma}:0\le\gamma<\beta/2\}.
		\end{aligned}
		\]
		Their union is an open arc of length $\pi/2$.
		
		Let $d=n+1\ge3$. Adjacent solutions of
		$c^{2d}=\det G$ on the unit circle have arguments differing by
		$\pi/d\le\pi/3<\pi/2$. Lemma~\ref{lem:phase-grid} therefore allows
		us to choose a solution $c$ in the union above. According as
		$c\in\Phi(B_+)$ or $c\in\Phi(B_-)$, choose the corresponding square
		root. Since $B_\pm^2=\diag(1,e^{2i\beta})$,
		Lemma~\ref{lem:active-root-closure} gives the required square root.
		
		\medskip\noindent\textit{(ii) The loxodromic case.}
		By Lemma~\ref{lem:active-blocks}, in a normalized null basis we may
		write
		\[
		G=\mu^2\diag(r^2,r^{-2})\orth D,
		\qquad \mu\in\T,\quad r>1,\quad
		D\in\UU(W),\quad W>0.
		\]
		Take
		\[
		B=\mu\diag(r,r^{-1}).
		\]
		If the Gram matrix of the null basis is
		$\left(\begin{smallmatrix}0&1\\1&0\end{smallmatrix}\right)$, then a
		negative unit vector satisfies
		$\overline v_1v_2=-1/2+it$. Hence
		\[
		\mathcal W_-(B)
		=
		\mu\left\{
		\frac{r+r^{-1}}2+is:s\in\R
		\right\},
		\]
		where a nonzero real factor has been absorbed into the parameter $s$.
		It follows that
		\[
		\Phi(B)
		=\{\mu e^{i\gamma}:|\gamma|<\pi/2\},
		\]
		an open arc of length $\pi$.
		
		Let $d=n+1\ge3$. Adjacent solutions of $c^{2d}=\det G$ on the unit
		circle have arguments differing by $\pi/d$, so
		Lemma~\ref{lem:phase-grid} gives a $c\in\Phi(B)$. Since
		$B^2=\mu^2\diag(r^2,r^{-2})$, the assertion follows from
		Lemma~\ref{lem:active-root-closure}.
		
		\medskip\noindent\textit{(iii) The two-step unipotent case.}
		In the null basis from Lemma~\ref{lem:active-blocks}, write
		\[
		G=
		\lambda
		\begin{pmatrix}1&it\\0&1\end{pmatrix}
		\orth D,
		\qquad t\in\R\setminus\{0\}.
		\]
		Choose $\mu^2=\lambda$, put $u=t/2$, and set
		\[
		B=
		\mu
		\begin{pmatrix}1&iu\\0&1\end{pmatrix}.
		\]
		Then $B^2$ is precisely the active block above. If $v$ is a negative
		unit vector, set $s=|v_2|^2>0$ in the same null basis. Then
		\[
		-\ip{v}{Bv}=\mu(1-ius).
		\]
		Thus
		\[
		\Phi(B)=
		\begin{cases}
			\{\mu e^{i\gamma}:-\pi/2<\gamma<0\},&u>0,\\
			\{\mu e^{i\gamma}:0<\gamma<\pi/2\},&u<0.
		\end{cases}
		\]
		This is an open arc of length $\pi/2$. Since $d=n+1\ge3$,
		$\pi/d\le\pi/3<\pi/2$, and Lemma~\ref{lem:phase-grid} allows us to
		choose $c\in\Phi(B)$ satisfying $c^{2d}=\det G$.
		Lemma~\ref{lem:active-root-closure} completes the proof.
	\end{proof}
	
	\subsection{The square-root construction for three-step unipotent isometries}
	\label{sec:three-step}
	
	In the two-dimensional case, spectral pairing is accomplished through one
	negative-type eigenline. The active block of a three-step unipotent isometry
	is three-dimensional. We first take a square root of this pure unipotent
	block and put it into standard form, then construct a loxodromic
	two-dimensional block, and finally combine it with the positive-definite
	orthogonal complement to complete the construction.
	
	\begin{proposition}[Square roots of three-step unipotent isometries]
		\label{prop:sqrt-three-step}
		Let $n\ge2$. Every three-step unipotent element $g\in\PU(n,1)$ has a
		square root $h$ satisfying $\linv(h)\le3$.
	\end{proposition}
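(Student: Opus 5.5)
The plan is to imitate the two-dimensional construction of Lemma~\ref{lem:active-root-closure}, replacing Lemma~\ref{lem:spectral-strong} by the loxodromic criterion Lemma~\ref{lem:lox-strong-reversible}. By Lemma~\ref{lem:active-blocks}(d), choose a lift $G=N\orth D$ on $V=H_3\orth W$ with $N$ three-step unipotent, $D\in\UU(W)$ semisimple and $m=\dim W=n-2$. Write $N=\exp Y$ on $H_3$ and take the unipotent square root $N_0=\exp(Y/2)$. Note that $\det G=\det D$.

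We will look for $R=N_0\orth F$ and $K=K_3\orth K_W$ with $F^2=D$ and $K^2=\Id$. We then want $RK$ to satisfy the hypotheses of Lemma~\ref{lem:lox-strong-reversible}, and conclude by Lemma~\ref{lem:sqrt-three-reduction}.

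The key step, and the main obstacle, is a three-dimensional lemma. For every phase $c$ in a suitable open arc $\Omega\subset\T$, we want a unitary involution $K_3$ of $H_3$ such that $N_0K_3$ is semisimple with three properties:
\begin{enumerate}
\item it preserves an orthogonal splitting $H_3=H\orth\C e$ with $\operatorname{sign}(H)=(1,1)$ and $e>0$;
\item on $H$ it is loxodromic, with null-eigenvalues $cr$ and $cr^{-1}$ for some $r>1$;
\item on $e$ it acts by a scalar $p$.
\end{enumerate}
Since $\det N_0=1$ and $\det K_3=\pm1$, comparing determinants forces $c^2p=\pm1$, so $p=\pm c^{-2}$. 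I would look for $K_3=\Id-2P_{\C q}$, a reflection in a positive line. In the basis $(Y^2u,Yu,u)$ with the Gram matrix of Lemma~\ref{lem:active-blocks}, one writes the characteristic polynomial of $N_0K_3$ in terms of $q$. Since $N_0K_3$ is unitary, it is determined by its trace. The condition that its spectrum has the form $\{cr,cr^{-1},p\}$ is then one real-codimension condition on $\operatorname{tr}(N_0K_3)$. The plan is to show the following:
\begin{enumerate}
\item as $q$ varies, the attainable phases $c$ of the loxodromic pair fill an open arc;
\item replacing $K_3$ by $-K_3$, or $N_0$ by $-N_0$, rotates this arc;
\item together these arcs cover an open arc of length strictly greater than $\pi/(n+1)$, and ideally all of $\T$.
\end{enumerate}
Heuristically, a reflection in a positive line nearly orthogonal to the fixed null vector $Y^2u$ composed with the parabolic $N_0$ should be loxodromic. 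The parameters of $q$ (its component along the Heisenberg direction and its phase) should then control the phase $c$ of the product. If the reflection family turns out too rigid, I would instead let $K_3$ be the identity on a chosen positive line and build a loxodromic element directly on a $(1,1)$-plane, as in Lemma~\ref{lem:negative-activity}. Semisimplicity must also be checked, but it follows once $r>1$, since the eigenvalues are then distinct.

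Granting this lemma, the completion is formal. We need $F,K_W$ such that $\{p\}\sqcup\spec(FK_W)$ is $\rho_c$-invariant. By Proposition~\ref{prop:sqrt-positive-completion}, this holds exactly when $p^2\det D=c^{2(m+1)}$. Substituting $p^2=c^{-4}$ turns this into $c^{2(n+1)}=\det D=\det G$. By Lemma~\ref{lem:phase-grid}, this equation has a solution in $\Omega$, because $\Omega$ is an open arc longer than $\pi/(n+1)$.

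With this $c$, the operator $RK=(N_0K_3)\orth(FK_W)$ is semisimple by Lemma~\ref{lem:orthogonal-gluing}. It is loxodromic with phase $c$ on $H$, and its spectrum on the positive part $\C e\orth W$ is $\rho_c$-invariant. Lemma~\ref{lem:lox-strong-reversible} then gives $[RK]\in\mathcal I^{\le2}$. Since $R^2=G$, Lemma~\ref{lem:sqrt-three-reduction} yields $h=[R]$ with $h^2=g$ and $\linv(h)\le3$. For $n=2$ we have $W=\{0\}$, so the condition becomes $p^2=c^2$, that is $c^6=1$; this is consistent with the same argument provided $\Omega$ is longer than $\pi/3$.
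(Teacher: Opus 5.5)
\textbf{There is a genuine gap: the three-dimensional lemma is the whole content of the proposition, and you do not prove it.}

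Your global reduction matches the paper's and is correct. This covers the lift $G=N\orth D$, the root $N_0=\exp(Y/2)$, and the ansatz $R=N_0\orth F$, $K=K_3\orth K_W$. It also covers the bookkeeping $c^2p=\det K_3$, so $p=-c^{-2}$ for a reflection in a positive line. Finally, it covers turning Proposition~\ref{prop:sqrt-positive-completion} into $c^{2(n+1)}=\det G$ and the closing appeal to Lemmas~\ref{lem:lox-strong-reversible} and~\ref{lem:sqrt-three-reduction}.

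Your items (1)--(3) are a programme rather than an argument. For a prescribed $c$ you must exhibit a positive $q$ with $\tr(N_0K_3)=3-2\ip{q}{N_0q}/\ip{q}{q}$ lying on the ray $\{cs-c^{-2}:s>2\}$. You must do this for every $c$ in an arc longer than $\pi/(n+1)$. Nothing in the proposal shows that this ray is ever hit.

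Your heuristic does not supply it. If $q\perp Y^2u$ exactly, then $N_0K_3$ fixes the null vector $Y^2u$ with eigenvalue $1$, so it is not loxodromic. A perturbation argument would still have to control the phase of the resulting pair.

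Your fallback fails outright. If $K_3e=\pm e$ and $N_0K_3e=pe$, then $e$ is an eigenvector of $N_0$, so it lies in the null line $\C Y^2u$. Hence the positive eigenline of $N_0K_3$ must be one that $K_3$ moves. You cannot split off a positive line on which $K_3$ is trivial.

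The missing idea is to build the positive eigenline first by an equal-norm exchange. This reduces the condition on the remaining $(1,1)$-block to a single real inequality.
\begin{itemize}
\item Conjugate $N_0$ to the standard three-step unipotent matrix $N$, with antidiagonal Hermitian form.
\item Take $u=(1/2,0,1)^T$ and $v=Nu$. These are orthogonal positive unit vectors with $\ip{u}{N^2u}=-3$.
\item With $p=-c^{-2}$ and $q=v-pu$, one has $\ip{q}{q}=2$. Lemma~\ref{lem:equal-norm-exchange} gives the reflection $J_H$ in $\C q$ with $J_Hv=pu$, so $NJ_Hv=pv$ by construction.
\item Expanding $\ip{q}{Nq}=-p+3p^{-1}$ gives $\tr(NJ_H)=3+p-3p^{-1}$. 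So the restriction $L$ of $NJ_H$ to $H_3\cap v^\perp$ satisfies $\det L=c^2$ and $\tr L=3(1+c^2)$.
\item Write $c=e^{i\gamma}$. Then $c^{-1}L$ has determinant $1$ and real trace $6\cos\gamma$, which exceeds $2$ whenever $\cos\gamma>1/3$. This holds in particular for the principal root, where $|\gamma|\le\pi/(2d)\le\pi/6$.
\item So $L$ is semisimple with null eigenvalues $cr,cr^{-1}$, $r>1$: a loxodromic pair with phase exactly $c$.
\end{itemize}
This one computation replaces your items (1)--(3). With it in hand, the rest of your argument goes through as written.
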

	
	\begin{proof}
		By Lemma~\ref{lem:active-blocks}, one may choose a lift and an
		orthogonal decomposition
		\[
		V=H_3\orth W,
		\qquad
		\operatorname{sign}(H_3)=(2,1),\quad W>0,
		\]
		such that
		\[
		G=U\orth D,
		\qquad
		U\in\UU(H_3),
		\qquad
		(U-\Id)^3=0,\quad (U-\Id)^2\ne0,
		\qquad D\in\UU(W).
		\]
		Here $U$ is a pure three-step unipotent operator on the
		three-dimensional active subspace $H_3$. Since $U-\Id$ is nilpotent,
		set
		\[
		Y=\log U
		=(U-\Id)-\frac12(U-\Id)^2.
		\]
		The identity $U^\dagger=U^{-1}$ gives $Y^\dagger=-Y$. Since
		$Y^3=0$ and $Y^2\ne0$, let
		\[
		M=\exp\!\left(\frac12Y\right).
		\]
		Then $M$ is unitary and $M^2=\exp Y=U$. Moreover,
		\[
		M-\Id=\frac12Y+\frac18Y^2,
		\]
		so $(M-\Id)^3=0$ and $(M-\Id)^2\ne0$. Thus $M$ is again a pure
		three-step unipotent operator. By the standard form for three-step
		unipotent parabolic isometries
		\cite[Section~3.3.2]{PaupertWill2017}, after unitary conjugation we may
		assume that
		\[
		M=N=
		\begin{pmatrix}
			1&-\sqrt2&-1\\
			0&1&\sqrt2\\
			0&0&1
		\end{pmatrix}.
		\]
		In these standard coordinates, the Hermitian form is represented by
		\[
		\mathsf H_3=
		\begin{pmatrix}
			0&0&1\\
			0&1&0\\
			1&0&0
		\end{pmatrix}.
		\]
		Therefore
		\begin{equation}\label{eq:three-step-square-lift}
			G=N^2\orth D.
		\end{equation}
		
		Let $d=\dim_{\C}V=n+1$ and $m=\dim_{\C}W=d-3$. Take
		$\theta=\operatorname{Arg}(\det G)\in(-\pi,\pi]$ and set
		\begin{equation}\label{eq:three-step-principal-phase}
			c=\exp\!\left(\frac{i\theta}{2d}\right),
			\qquad p=-c^{-2}.
		\end{equation}
		Then
		\[
		c^{2d}=\det G=\det D,
		\qquad
		|\operatorname{Arg}c|\le\frac{\pi}{2d}\le\frac{\pi}{6}.
		\]
		
		Let
		\[
		u=(1/2,0,1)^T,
		\qquad
		v=Nu=(-1/2,\sqrt2,1)^T.
		\]
		A direct calculation gives
		\[
		\ip{u}{u}=\ip{v}{v}=1,
		\qquad
		\ip{u}{v}=0,
		\qquad
		\ip{u}{N^2u}=-3.
		\]
		Put $q=v-pu$. Then $\ip{q}{q}=2$. Since $|p|=1$, the vectors $v$
		and $pu$ have the same norm, and $\ip{v}{pu}=0\in\R$. Hence,
		by Lemma~\ref{lem:equal-norm-exchange}, the reflection in the positive
		line $\C q$,
		\[
		J_H=\Id-2P_{\C q},
		\]
		satisfies
		\[
		J_Hv=pu,\qquad NJ_Hv=pv.
		\]
		Thus $\C v$ is a positive-type eigenline of $NJ_H$ with
		corresponding eigenvalue $p$.
		
		Using $v=Nu$, $Nq=Nv-pv$, unitarity, and the three inner products
		above, we obtain
		\[
		\begin{aligned}
			\ip{q}{Nq}
			&=\ip{v-pu}{Nv-pv}\\
			&=\ip{v}{Nv}-p\ip{v}{v}
			-\overline p\ip{u}{Nv}+\overline pp\ip{u}{v}\\
			&=\ip{u}{Nu}-p-\overline p\ip{u}{N^2u}
			=-p+3p^{-1}.
		\end{aligned}
		\]
		With our convention for the Hermitian form, the rank-one trace formula
		gives
		\[
		P_{\C q}x
		=q\frac{\ip{q}{x}}{\ip{q}{q}},
		\qquad
		\tr(NP_{\C q})
		=\frac{\ip{q}{Nq}}{\ip{q}{q}}.
		\]
		The second identity corresponds to the rank-one operator
		$x\mapsto Nq\,\ip{q}{x}/\ip{q}{q}$. Since $\tr N=3$, it follows that
		\[
		\tr(NJ_H)
		=\tr N-2\tr(NP_{\C q})
		=3-2\frac{\ip{q}{Nq}}{\ip{q}{q}}
		=3+p-3p^{-1}.
		\]
		
		The operator $NJ_H$ is unitary and preserves the nonisotropic
		eigenline $\C v$, so it also preserves its orthogonal complement in
		$H_3$. Let
		\[
		H_v:=H_3\cap v^\perp,
		\qquad
		L=(NJ_H)|_{H_v}.
		\]
		The space $H_v$ has signature $(1,1)$. Since $\det N=1$ and
		$\det J_H=-1$, computing the determinant along the orthogonal
		decomposition $H_3=\C v\orth H_v$ gives
		\[
		p\det L=\det(NJ_H)=-1.
		\]
		Subtracting the eigenvalue $p$ from the total trace and using
		$p=-c^{-2}$, we obtain
		\[
		\det L=-p^{-1}=c^2,
		\qquad
		\tr L=\tr(NJ_H)-p
		=3-3p^{-1}=3(1+c^2).
		\]
		If $c=e^{i\gamma}$, then $S=c^{-1}L\in\UU(1,1)$, and
		\[
		\det S=1,
		\qquad
		\tr S=3(c+c^{-1})=6\cos\gamma>2;
		\]
		the strict inequality follows from $|\gamma|\le\pi/6$. Hence the
		characteristic polynomial of $S$ is
		\[
		t^2-(\tr S)t+1,
		\]
		and has two distinct positive real roots $r,r^{-1}$, with $r>1$. If
		$Sx=rx$, then
		\[
		\ip{x}{x}=\ip{Sx}{Sx}=r^2\ip{x}{x},
		\]
		so $\ip{x}{x}=0$; the same argument applies to the root $r^{-1}$.
		The roots are distinct, so $S$ is semisimple and both its eigenlines
		are null. Multiplying back by the phase $c$, we see that $L$ is
		semisimple and its eigenvalues on the two null eigenlines are
		\[
		cr,\qquad cr^{-1}
		\qquad(r>1).
		\]
		
		It remains to pair the spectrum on the positive-definite orthogonal
		complement. By~\eqref{eq:three-step-principal-phase},
		\[
		p^2\det D
		=c^{-4}c^{2d}
		=c^{2(d-2)}
		=c^{2(m+1)}.
		\]
		Proposition~\ref{prop:sqrt-positive-completion} gives $F^2=D$ and
		$K_W^2=\Id$ such that
		\[
		\{p\}\sqcup\spec(FK_W)
		\]
		is invariant under $\rho_c$. Finally, set
		\[
		R=N\orth F,\qquad K=J_H\orth K_W.
		\]
		By~\eqref{eq:three-step-square-lift}, $R^2=G$. On the
		two-dimensional space $H_v$ of signature $(1,1)$, the spectrum of
		$RK$ is $cr,cr^{-1}$, while its spectrum on the positive-definite
		space $\C v\orth W$ is invariant under $\rho_c$. Therefore
		Lemma~\ref{lem:lox-strong-reversible} shows that $[RK]$ is a product
		of at most two holomorphic involutions. The result follows from
		Lemma~\ref{lem:sqrt-three-reduction}.
	\end{proof}
	
	\begin{corollary}\label{cor:sqrt-upper}
		For every $n\ge2$ and every $g\in\PU(n,1)$, there exists
		$h\in\PU(n,1)$ such that
		\[
		h^2=g,\qquad \linv(h)\le3.
		\]
	\end{corollary}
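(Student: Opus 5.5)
The corollary should follow by combining the case analysis of Lemma~\ref{lem:active-blocks} with the two constructions of Section~\ref{sec:sqrt-types}. We split according to the type of $g$.

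If $g=1$, we take $h=1$, since $\linv(1)=0$. Otherwise, we fix a lift $G\in\UU(n,1)$ of $g$. By Lemma~\ref{lem:active-blocks}, exactly one of cases (a)--(d) applies.

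In cases (a), (b) and (c), the element $g$ is elliptic, loxodromic or two-step unipotent. There Proposition~\ref{prop:sqrt-two-dimensional} directly supplies a square root $h$ with $\linv(h)\le3$.

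In case (d), $g$ is three-step unipotent, and Proposition~\ref{prop:sqrt-three-step} supplies the square root. That proposition allows the lift to be rescaled by a central scalar to reach the normal form $N\orth D$. Rescaling does not change the projective class, so the resulting $h$ still satisfies $h^2=g$ in $\PU(n,1)$.

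The only point needing care is that the case split is exhaustive and that the hypotheses match exactly. Two checks are involved.

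First, the elliptic case as stated in Lemma~\ref{lem:active-blocks}(a) is exactly the semisimple case with all eigenvalues on $\T$. This is what the proof of Proposition~\ref{prop:sqrt-two-dimensional}(i) uses. There one needs a positive-type eigenline with eigenvalue $\delta\ne1$ after normalizing the negative-type eigenvalue to $1$. Such an eigenline exists precisely because $g\ne1$: if every positive-type eigenvalue equaled the negative-type one, $G$ would be scalar.

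Second, the two-dimensional constructions require $d=n+1\ge3$ in order to place a solution of $c^{2d}=\det G$ in the relevant arc. This holds since $n\ge2$.

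With both checks in place, the four cases together cover every element, and the corollary follows.
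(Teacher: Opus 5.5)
Your proposal is correct and follows the paper's proof exactly: take $h=1$ when $g=1$, otherwise use the four-case split of Lemma~\ref{lem:active-blocks}, applying Proposition~\ref{prop:sqrt-two-dimensional} in cases (a)--(c) and Proposition~\ref{prop:sqrt-three-step} in case (d). Your two extra checks, the existence of an eigenvalue $\delta\ne1$ when $g\ne1$ and the requirement $d=n+1\ge3$, are already handled inside those propositions, so the corollary's own proof needs only the case split.
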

	
	\begin{proof}
		If $g=1$, take $h=1$. If $g\ne1$, Lemma~\ref{lem:active-blocks}
		gives four cases. Apply Proposition~\ref{prop:sqrt-two-dimensional}
		in the first three cases and Proposition~\ref{prop:sqrt-three-step} in
		the three-step unipotent case.
	\end{proof}
	
	\section{Proofs of the Main Theorems and the Commutator Representation}
	\label{sec:sqrt-conclusion}
	
	\begin{proof}[Proof of Theorem~\ref{thm:sqrt-main}]
		For every $n\ge2$, Corollary~\ref{cor:sqrt-upper} gives
		\[
		\rootlinv(\PU(n,1))\le3.
		\]
		On the other hand, choose $\lambda\in\T\setminus\{\pm1\}$. By
		Lemma~\ref{lem:point-not-two},
		$[A_\lambda]\notin\mathcal I^{\le2}$. If there were a square root
		$h$ satisfying $h^2=[A_\lambda]$ and $\linv(h)\le2$, then
		Lemma~\ref{lem:two-square-closed} would imply
		$[A_\lambda]=h^2\in\mathcal I^{\le2}$, a contradiction. Hence every
		square root of $[A_\lambda]$ has involution length at least $3$.
		Combining this with the upper bound gives
		\[
		\rootlinv([A_\lambda])=3,
		\qquad
		\rootlinv(\PU(n,1))=3.
		\]
	\end{proof}
	
	\begin{proof}[Proof of Theorem~\ref{thm:main}]
		Let $g\in\PU(n,1)$. By Corollary~\ref{cor:sqrt-upper}, there exists
		$h\in\PU(n,1)$ such that $h^2=g$ and $\linv(h)\le3$. Write $h$ as a
		product of at most three involutions and pad with identity factors so that
		$h=abe$. Lemma~\ref{lem:three-square-four} gives
		\[
		g=(abe)^2=(aba)(aea)be,
		\]
		so $\linv(g)\le4$. Hence $\linv(\PU(n,1))\le4$. Conversely, for
		every $n\ge3$, Corollary~\ref{cor:lower} provides a point rotation
		that cannot be written as a product of at most three
		holomorphic involutions. Thus $\linv(\PU(n,1))\ge4$, and the two
		inequalities prove the theorem.
	\end{proof}
	
	Combining the two theorems above with the two-dimensional result of
	Paupert--Will, the exact values can be written uniformly as
	\[
	\linv(\PU(n,1))=4,
	\qquad
	\rootlinv(\PU(n,1))=3
	\qquad(n\ge2).
	\]
	
	\begin{proof}[Proof of Corollary~\ref{cor:commutator-main}]
		Let $g\in\PU(n,1)$. By Corollary~\ref{cor:sqrt-upper}, there exists
		$h\in\PU(n,1)$ such that
		\[
		h^2=g,\qquad \linv(h)\le3.
		\]
		We may write
		\[
		h=abe,\qquad a^2=b^2=e^2=1,
		\]
		with $b$ a nontrivial holomorphic involution. Indeed, if a shortest
		decomposition has three factors, take its middle factor as $b$. If
		$\linv(h)=2$, write $h=a_0b_0$ and take
		\[
		(a,b,e)=(a_0,b_0,1).
		\]
		If $\linv(h)=1$, choose any nontrivial holomorphic involution $\iota$
		and write $h=h\iota\iota$. If $h=1$, write $h=\iota\iota1$.
		
		Set
		\[
		x=ab,\qquad y=eb,\qquad I=b.
		\]
		By Lemma~\ref{lem:three-square-four},
		$g=h^2=[x,y]_{\mathrm c}$. Moreover,
		\[
		IxI=b(ab)b=ba=x^{-1},
		\]
		and
		\[
		IyI=b(eb)b=be=y^{-1}.
		\]
		Thus $I$ simultaneously reverses $x$ and $y$.
		
		Therefore every element of $\PU(n,1)$ is a single commutator. Since
		$\PU(n,1)$ is nontrivial, $\operatorname{cl}(\PU(n,1))=1$.
	\end{proof}

\end{document}